\tikzset{every picture/.style={line width=0.75pt}} 
 \patchcmd\Gread@eps{\@inputcheck#1 }{\@inputcheck"#1"\relax}{}{}
\newcommand\restr[2]{{
  \left.\kern-\nulldelimiterspace 
  #1 
  \vphantom{\Big|} 
  \right|_{#2} 
  }}
\newcommand{\intav}[1]{\mathchoice {\mathop{\vrule width 6pt height 3 pt depth  -2.5pt
\kern -8pt \intop}\nolimits_{\kern -6pt#1}} {\mathop{\vrule width
5pt height 3  pt depth -2.6pt \kern -6pt \intop}\nolimits_{#1}}
{\mathop{\vrule width 5pt height 3 pt depth -2.6pt \kern -6pt
\intop}\nolimits_{#1}} {\mathop{\vrule width 5pt height 3 pt depth
-2.6pt \kern -6pt \intop}\nolimits_{#1}}}
\def\polhk#1{\setbox0=\hbox{#1}{\ooalign{\hidewidth\lower1.5ex\hbox{`}\hidewidth\crcr\unhbox0}}}
\def\XXint#1#2#3{{\setbox0=\hbox{$#1{#2#3}{\int}$ }
\vcenter{\hbox{$#2#3$ }}\kern-.6\wd0}}
\newtheorem{theorem}{Theorem}
\newtheorem{example}{Example}
\newtheorem{definition}{Definition}
\newtheorem{corollary}{Corollary}
\newtheorem{proposition}{Proposition}
\newtheorem{remark}{Remark}
\newtheorem{assumption}{A}
\patchcmd{\env@cases}{1.2}{1}{}{}
\begin{document}

\title{A numerical scheme for a fully nonlinear free boundary problem}

\author{Edgard A. Pimentel and Erc\'ilia Sousa}
	
\date{\today} 

\maketitle

\begin{abstract}

\noindent We propose a numerical method to approximate viscosity solutions of fully nonlinear free transmission problems. The method discretises a two-layer regularisation of a PDE, involving a functional and a vanishing parameter. The former is handled via a fixed-point argument. We then prove that the numerical method converges to a one-parameter regularisation of the free boundary problem. Regularity estimates enable us to take the vanishing limit of such a parameter and recover a viscosity solution of the free transmission problem. Our main contribution is the design of a computational strategy, based on fixed-point arguments and approximated problems, to solve fully nonlinear free boundary models. We finish the paper with two numerical examples to validate our method. 

\smallskip

\noindent \textbf{Keywords}:  Fully nonlinear free transmission problems; viscosity solutions; finite difference methods; approximation by regularisation.

\smallskip 

\noindent \textbf{MSC(2020)}: 65N06; 35D40; 35R35.
\end{abstract}

\vspace{.1in}

\section{Introduction}\label{sec_intro}

We propose a finite difference method for the free boundary problem
\begin{equation}\label{eq_main1}
	\begin{cases}
		F_1(D^2u)=f&\hspace{.2in}\mbox{in}\hspace{.1in}\Omega\cap\left\lbrace u>0\right\rbrace\\
		F_2(D^2u)=f&\hspace{.2in}\mbox{in}\hspace{.1in}\Omega\cap\left\lbrace u<0\right\rbrace\\
		u=g&\hspace{.2in}\mbox{on}\hspace{.1in}\partial\Omega,
	\end{cases}
\end{equation}
where $F_1,F_2:S(d)\to\mathbb{R}$ are uniformly elliptic operators, $f\in L^\infty(\Omega)\cap C(\Omega)$, and $g\in C(\partial\Omega)$. The domain $\Omega\subset\mathbb{R}^d$ is an open and bounded set satisfying a uniform exterior cone condition. 

The free boundary problem \eqref{eq_main1} describes a discontinuous diffusion process, with solution-dependent discontinuities taking place at the level of the operator. This model falls within the scope of the so-called \emph{free transmission problems} and was introduced in \cite{PimSwi,PimSan2023}. 

Transmission problems were introduced in the mid-1950s by Mauro Picone \cite{Picone} and have attracted considerable attention from the mathematical community since then. Firstly, this class of problems was studied in the context of fixed interfaces, where discontinuities occur across a priori known subregions. In recent years, transmission models have been formulated in the context of free boundaries. For a comprehensive review of the subject, we refer the reader to \cite{BiaPimUrb}. 

Fundamental questions concerning the existence of solutions to \eqref{eq_main1} and their regularity properties have been studied in the literature. In \cite{PimSwi}, the authors establish the existence of viscosity solutions to \eqref{eq_main1}. They suppose the operators $F_1$ and $F_2$ are $(\lambda,\Lambda)$-elliptic, $g\in C(\partial\Omega)$, and $f\in L^p(\Omega)$, where $d/2<p_0<p$. Here, $p_0$ is the integrability level above which the Aleksandroff-Bakelman-Pucci estimate is available for the viscosity solutions to $F(D^2u)=f$. Under a near-convexity condition for $F_1$ and $F_2$, the authors prove the existence of strong solutions to \eqref{eq_main1}.

The regularity of the solutions to \eqref{eq_main1} is the subject of \cite{PimSan2023}. In that paper, the authors prove that viscosity solutions are in $W^{2,{\rm BMO}}_{\rm loc}(\Omega)$, with estimates. Under a density condition on the negative phase, they establish (optimal) $C^{1,1}_{\rm loc}$-regularity estimates. Regularity estimates at the intersection of free and fixed boundaries are the subject of \cite{JesPimSto}.

The computational aspects of \eqref{eq_main1} remain largely open. Though the numerical analysis of viscosity solutions has been extensively studied in the literature (see \cite{BarSou,BonZid,Bonnet,CagGomTra,CapFal,CraLio1984,DebJak,Fal,FenJen,Obe,Sou}, to mention just a few), genuine difficulties stem from the model under analysis. A fundamental difficulty in the numerical treatment of \eqref{eq_main1} concerns the solution-dependent discontinuities of the operator driving the diffusion process. Among other things, one cannot write \eqref{eq_main1} as an equality driven by the maximum operator -- as usual in the numerical analysis of the obstacle problem, for example; see \cite{Obe}. Our main contribution is in formulating computationally the strategy put forward in \cite{PimSwi}.

In brief, the argument in \cite{PimSwi} develops as follows. For $0<\varepsilon\ll1$ and $v\in C(\overline\Omega)$ fixed, define $h_\varepsilon^v:\Omega\to[0,1]$ as
\[
	h_\varepsilon^v(x)\coloneqq\left[\max\left(\min\left(\frac{v(x)+\varepsilon}{2\varepsilon},1\right),0\right)\ast\eta_\varepsilon\right](x),
\]
where $\eta_\varepsilon$ is a standard symmetric mollifying kernel. Define the fully nonlinear elliptic operator $F_\varepsilon^v:S(d)\times\mathbb{R}\times\Omega\to\mathbb{R}$ as
\begin{equation}\label{eq_nm0}
	F_{\varepsilon}^v(M,r,x)\coloneqq\varepsilon r+h_\varepsilon^v(x)F_1(M)+(1-h_\varepsilon^v(x))F_2(M).
\end{equation}
This auxiliary operator leads to the Dirichlet problem
\begin{equation}\label{eq_nm1/2}
	\begin{cases}
		F_{\varepsilon}^v(D^2u,u,x)=f&\hspace{.2in}\mbox{in}\hspace{.1in}\Omega\\
		u=g&\hspace{.2in}\mbox{in}\hspace{.1in}\partial\Omega.
	\end{cases}
\end{equation}
In \cite{PimSwi}, the authors establish a comparison principle for \eqref{eq_nm0}, while ensuring the existence of global barriers for \eqref{eq_nm1/2}. An application of Perron's method ensures the existence of a (unique) viscosity solution $u_\varepsilon^v\in C(\overline\Omega)$ to \eqref{eq_nm1/2}. 

To recover a viscosity solution to \eqref{eq_main1}, the authors examine the two-parameter family $(u_\varepsilon^v)_{\varepsilon,v}$. The analysis of the functional parameter $v$ depends on a set $\mathcal{B}\subset C(\overline\Omega)$ and a map $T:\mathcal{B}\to C(\overline\Omega)$. For $0<\varepsilon\ll1$ fixed, this operator is given by $Tv\coloneqq u_\varepsilon^v$. The authors prove that $T$ has a fixed point, leading to the existence of a solution to
\[
	\begin{cases}
		F_{\varepsilon}^{u_\varepsilon}(D^2u_\varepsilon,u_\varepsilon,x)=f&\hspace{.2in}\mbox{in}\hspace{.1in}\Omega\\
		u_\varepsilon=g&\hspace{.2in}\mbox{in}\hspace{.1in}\partial\Omega.
	\end{cases}
\]
Finally, they take the limit $\varepsilon\to 0$. Regularity estimates for the family $(u_\varepsilon)_{0<\varepsilon\ll1}$ ensure the existence of $u\in C(\overline\Omega)$ such that $u_\varepsilon\to u$, as $\epsilon\to 0$, through a subsequence, if necessary. The stability of viscosity solutions implies that $u$ solves \eqref{eq_main1} in the viscosity sense.

We propose a numerical method for \eqref{eq_main1} by discretising \eqref{eq_nm1/2}. Indeed, for $0<h\ll1$ we consider a dicrete approximation of $\overline\Omega$, denoted with $\overline\Omega_h$ and examine the method
\begin{equation}\label{eq_nm1}
	G_{\varepsilon,h}^v(u_{\varepsilon,h}^v(x),x)\coloneqq
		\begin{cases}
			F_\varepsilon^v(D_h^2u_{\varepsilon,h}^v(x),u_{\varepsilon,h}^v(x),x)-f(x)&\hspace{.2in}\mbox{in}\hspace{.1in}\Omega_h\\
			u_{\varepsilon,h}^v(x)-g(x)&\hspace{.2in}\mbox{on}\hspace{.1in}\partial\Omega_h,
		\end{cases}
\end{equation}
where $D^2_h$ is a discrete approximation of the Hessian matrix, and $u_{\varepsilon,h}^v$ is a grid function. We start by establishing the existence of a unique solution to
\begin{equation}\label{eq_nm2}
	G_{\varepsilon,h}^v(u_{\varepsilon,h}^v(x),x)=0\hspace{.2in}\mbox{in}\hspace{.1in}\overline\Omega_h.
\end{equation}
Then, for $N\in\mathbb{N}$ depending only on $h$, we design a subset $\mathcal{B}_h\subset \mathbb{R}^N$ and a map $T:\mathcal{B}_h\to \mathbb{R}^N$. Properties of \eqref{eq_nm2} allow us to apply Schauder's Fixed Point Theorem to ensure the existence of $u_{\varepsilon,h}$ solving
\begin{equation}\label{eq_nm3}
	G_{\varepsilon,h}^{u_{\varepsilon,h}}(u_{\varepsilon,h}(x),x)=0\hspace{.2in}\mbox{in}\hspace{.1in}\overline\Omega_h.
\end{equation}
Once the existence of a solution to \eqref{eq_nm3} is available, we examine the numerical method described by $G_{\varepsilon,h}^{u_{\varepsilon,h}}$. We use a centred differences discretisation of the Hessian to ensure the monotonicity of the method. Also, we combine global barriers with a discrete version of the comparison principle to establish stability of $G_{\varepsilon,h}^{u_{\varepsilon,h}}$. Finally, the regularity of the operator $F_{\varepsilon}^{v}$ yields consistency of the method with \eqref{eq_nm1/2}.

Monotonicity, stability and consistency build upon convergence results in the literature to ensure that $u_{\varepsilon,h}\to u_\varepsilon$ locally uniformly, as $h\to 0$, where $u_\varepsilon\in C(\overline\Omega)$ solves \eqref{eq_nm1/2} in the viscosity sense; see \cite[Theorem 2]{BarSou}. As in the continuous case, we rely on regularity estimates available for the solutions to \eqref{eq_nm1/2}. As a result, the family of numerical solutions $(u_\varepsilon)_{0<\varepsilon\ll1}$ converges locally uniformly to a viscosity solution to \eqref{eq_main1}. Our main result is the following theorem.

\begin{theorem}[Convergence of the numerical scheme]\label{teo_main}
Suppose Assumptions A\ref{assump_Omega}-A\ref{assump_data}, to be detailed further, are in force. Fix $0<\varepsilon\ll1$. There exists $0<h_0\ll1$ such that, if $h\in(0,h_0)$, then \eqref{eq_nm3} has a unique solution $u_{\varepsilon,h}$. In addition, as $h\to 0$, $u_{\varepsilon,h}\to u_\varepsilon$ locally uniformly in $\Omega$, where $u_\varepsilon$ is the unique viscosity solution to \eqref{eq_nm1/2}. Finally, we have $u_\varepsilon\to u$, as $\varepsilon\to 0$, where $u$ solves \eqref{eq_main1} in the viscosity sense.
\end{theorem}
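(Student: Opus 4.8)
The plan is to prove the three assertions in the order stated, each one mirroring a step of the continuous argument of \cite{PimSwi} but carried out at the grid level. The backbone is a discrete comparison principle for the monotone scheme $G_{\varepsilon,h}^v$, which I would use three times over: to obtain well-posedness of \eqref{eq_nm2}, to produce the a priori bounds required by Schauder's theorem, and to furnish the stability ingredient in a Barles--Souganidis convergence argument as $h\to0$. The final limit $\varepsilon\to0$ is then essentially the vanishing-parameter passage already available in \cite{PimSwi,PimSan2023}.

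For the first assertion I would fix $v\in\mathcal{B}_h$ and treat \eqref{eq_nm2}. The centred-difference Hessian makes $G_{\varepsilon,h}^v$ degenerate elliptic (nonincreasing in the neighbouring grid values), while the zeroth-order term $\varepsilon r$ in \eqref{eq_nm0} renders it strictly increasing in $u_{\varepsilon,h}^v(x)$. These two properties yield a discrete comparison principle, hence uniqueness for \eqref{eq_nm2}; existence follows from the discrete Perron method, the global barriers of \cite{PimSwi} supplying ordered sub- and supersolutions with bounds independent of $h$. This defines $Tv\coloneqq u_{\varepsilon,h}^v$ on a compact convex set $\mathcal{B}_h\subset\mathbb{R}^N$ (grid functions trapped between the barriers). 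Continuity of $T$ is where the mollification pays off: since $h_\varepsilon^v\in[0,1]$, one has $\|h_\varepsilon^{v_1}-h_\varepsilon^{v_2}\|_\infty\le C(\varepsilon)\|v_1-v_2\|_\infty$ with a constant independent of $h$, and the comparison estimate converts this into continuous dependence of $u_{\varepsilon,h}^v$ on $v$. As $\mathcal{B}_h$ lives in $\mathbb{R}^N$, Schauder (indeed Brouwer) produces a fixed point, i.e. a solution of \eqref{eq_nm3}. For uniqueness of the coupled solution I would show that, for $h<h_0$, $T$ is a contraction; this is where the $h$-smallness enters and must be handled carefully, because the Lipschitz constant $C(\varepsilon)$ of $v\mapsto h_\varepsilon^v$ scales like $\varepsilon^{-1}$.

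For the second assertion I would invoke \cite[Theorem 2]{BarSou}. Monotonicity is built into the centred scheme; stability is the uniform (in $h$) bound furnished by the barriers together with the discrete comparison principle; and consistency with \eqref{eq_nm1/2} follows from the regularity of $F_\varepsilon^v$ in its arguments. The one nonstandard point is the self-referential coefficient $h_\varepsilon^{u_{\varepsilon,h}}$: the scheme is not a fixed operator but depends on its own solution, so the consistency check is a priori circular. I would break the circularity using that $\{h_\varepsilon^{u_{\varepsilon,h}}\}_h$ is uniformly Lipschitz, with constant of order $\varepsilon^{-1}$ but \emph{independent of $h$}, since $h_\varepsilon^v\in[0,1]$ and mollification controls its gradient regardless of $v$. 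Along a subsequence these coefficients converge uniformly to some $\chi$, freezing the scheme; Barles--Souganidis then identifies the half-relaxed limits of $u_{\varepsilon,h}$ as sub- and supersolutions of $\varepsilon r+\chi F_1+(1-\chi)F_2=f$, comparison forces them to coincide, yielding a locally uniform limit $w$, and continuity of $v\mapsto h_\varepsilon^v$ gives $\chi=h_\varepsilon^{w}$. Hence $w$ solves \eqref{eq_nm1/2}, and uniqueness of that solution gives $w=u_\varepsilon$ and convergence of the full family.

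The third assertion is the vanishing-parameter passage of \cite{PimSwi,PimSan2023}. I would use the regularity estimates for $(u_\varepsilon)_{0<\varepsilon\ll1}$ (uniform H\"older, or the $W^{2,\mathrm{BMO}}_{\mathrm{loc}}$ bounds of \cite{PimSan2023}) to extract, via Arzel\`a--Ascoli, a locally uniformly convergent subsequence $u_\varepsilon\to u$; stability of viscosity solutions then lets me send $\varepsilon\to0$, the term $\varepsilon u_\varepsilon$ vanishing and $h_\varepsilon^{u_\varepsilon}$ collapsing onto the phases $\{u>0\}$ and $\{u<0\}$, so that $u$ solves \eqref{eq_main1}. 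I expect the genuine obstacle to be the $h\to0$ step for the \emph{coupled} scheme \eqref{eq_nm3}: both the contraction estimate behind discrete uniqueness and the identification of the half-relaxed limits must absorb the $\varepsilon^{-1}$-dependence of the mollified coefficient, and it is precisely here that the restriction $h<h_0$ is forced.
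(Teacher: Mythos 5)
Your overall architecture coincides with the paper's: monotonicity from the centred, diagonally dominant discretisation, global quadratic barriers giving stability and the invariant set $\mathcal{B}_h$, a Schauder (here effectively Brouwer) fixed point producing a solution of \eqref{eq_nm3}, Barles--Souganidis \cite{BarSou} for the limit $h\to 0$, and the regularity-plus-stability passage of \cite{PimSwi} for $\varepsilon\to 0$. Two sub-arguments differ: you establish continuity of $T$ through the quantitative bound $\|h_\varepsilon^{v_1}-h_\varepsilon^{v_2}\|_\infty\le C(\varepsilon)\|v_1-v_2\|_\infty$, whereas the paper extracts a convergent subsequence of $(Tv_n)$ and identifies its limit via uniqueness of the frozen problem; and you explicitly confront the self-referential coefficient $h_\varepsilon^{u_{\varepsilon,h}}$ in the $h\to 0$ limit, which the paper treats only implicitly by proving consistency of $G_{\varepsilon,h}^{u}$ for a fixed continuous $u$. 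Both of these are legitimate and, in the second case, arguably more careful than the source.

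Two steps, as you describe them, would not go through. First, the contraction argument for uniqueness of the coupled solution: the natural estimate from the discrete comparison principle is $\|Tv_1-Tv_2\|_\infty\le\varepsilon^{-1}\,\|h_\varepsilon^{v_1}-h_\varepsilon^{v_2}\|_\infty\,\sup_x\left|F_1(D^2_hTv_1)-F_2(D^2_hTv_1)\right|$, and without an $h$-uniform bound on the discrete Hessian the last factor is only $O(h^{-2})$, so the Lipschitz constant of $T$ behaves like $C\varepsilon^{-2}h^{-2}$ and \emph{deteriorates} as $h\to 0$; taking $h$ small cannot make $T$ a contraction, and either a discrete second-derivative estimate or a different mechanism is required. (The paper's own proof, resting on Schauder, yields only existence, so uniqueness for \eqref{eq_nm3} is left unaddressed there as well.) Second, your appeal to mollification to get an $h$-uniform modulus for $h_\varepsilon^{u_{\varepsilon,h}}$ does not apply at the discrete level: the scheme uses the unmollified cutoff $\max\left(\min\left(\frac{v+\varepsilon}{2\varepsilon},1\right),0\right)$, so the coefficient inherits the modulus of continuity of $u_{\varepsilon,h}$ itself, and extracting a uniformly convergent subsequence of coefficients requires an $h$-uniform equicontinuity (discrete Krylov--Safonov/H\"older) estimate for the solutions, which must be supplied separately. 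The remaining ingredients --- barriers, monotonicity, consistency, and the $\varepsilon\to 0$ limit --- match the paper and are sound.
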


\begin{remark}[Main assumptions]\label{rem_assump}\normalfont
Among our main assumptions, we require the domain $\Omega$ to satisfy a uniform exterior cone condition. This is critical for the existence of viscosity solutions to \eqref{eq_main1}; see \cite{PimSwi}. We further assume $F_1$ and $F_2$ are operators of the Isaacs type. This is a natural assumption in the study of fully nonlinear elliptic equations; see \cite[Remark 1.5]{CabCaf}. For simplicity, we suppose the sets of matrices governing $F_1$ and $F_2$ satisfy a diagonal dominance condition. However, this assumption can be completely relaxed; see, for instance, \cite{DebJak}.
\end{remark}

\begin{remark}[Non-uniqueness and a selection criteria]\label{rem_selection}\normalfont
We recall that the uniqueness of solutions to \eqref{eq_main1} remains as an open problem. Therefore, the numerical approximation designed by Theorem \ref{teo_main} can be used to select families of solutions to \eqref{eq_main1}, provided they retain improved qualitative properties.
\end{remark}

\begin{remark}[Fixed-point analysis]\label{rem_fpa}\normalfont
One of the main contributions of the present paper is the analysis of a fixed-point argument leading to the existence of a (unique) solution to \eqref{eq_nm3}. Our strategy appears robust and can be adapted to a wider latitude of problems. We first mention the numerical treatment of mean-field game systems. In addition, it can simplify the numerical analysis of poliharmonic problems, once they are written in the form of a system of Poisson equations \cite{AlcPimUrb}. Finally, we believe it could be useful in the computational treatment of $p$-Poisson equations, provided one can write $\Delta_pu=f$ as
\[
	\begin{cases}
		{\rm div}\left(m Du\right)=f&\hspace{.2in}\mbox{in}\hspace{.2in}\Omega\\
		|Du|^2=m^\frac{2}{p-2}&\hspace{.2in}\mbox{in}\hspace{.2in}\Omega.
	\end{cases}
\]  
\end{remark}

The remainder of this manuscript is organised as follows. Section \ref{sec_prelim} gathers preliminary material used throughout the paper. We detail our numerical method and present the proof of Theorem \ref{teo_main} in Section \ref{sec_method}. Numerical examples validating our method are the subject of Section \ref{sec_examples}.

\section{Preliminaries}\label{sec_prelim}

We start with the main assumptions used in the paper. First, we impose a uniform exterior cone condition on the domain $\Omega$.

\begin{assumption}[Domain's geometry]\label{assump_Omega}
We suppose $\Omega\subset\mathbb{R}^d$ satisfies a uniform exterior cone condition. That is, there exists $r,\theta>0$ such that, for every $x\in\partial\Omega$, one can find a cone $C_\theta$ of opening $\theta$ centred at the origin, such that 
\[
	\left((x+C_\theta)\cap B_r(x)\right)\subset \mathbb{R}^d\setminus \Omega.
\]
\end{assumption}

Next, we detail our assumption on the uniform ellipticity of the operators $F_1$ and $F_2$.

\begin{assumption}[Uniform ellipticity]\label{assump_Felliptic}
Fix $0<\lambda\leq\Lambda$. We suppose the operators $F_1$ and $F_2$ are $(\lambda,\Lambda)$-uniformly elliptic. That is, for every $M,N\in S(d)$, we have
\[
	\lambda\left\|N\right\|\leq F(M)-F(M+N)\leq \Lambda\left\|N\right\|,
\]
provided $N\geq 0$.
\end{assumption}

We suppose that $F_1$ and $F_2$ are operators of Isaacs type, driven by symmetric matrices satisfying a diagonal dominance condition.

\begin{assumption}[Diagonal dominance]\label{assump_diagonal}
For $i=1,2$, we suppose
\[
	F_i(M)\coloneqq\sup_{\alpha\in\mathcal{A}}\inf_{\beta\in\mathcal{B}}{\rm Tr}\left(A_{\alpha,\beta}^iM\right),
\]
where $A_{\alpha,\beta}^i\coloneqq\left(a_{j,k}^{\alpha,\beta,i}\right)_{j,k=1}^d$ is a negative semi-definite matrix satisfying
\[
	\left|a_{j,j}^{\alpha,\beta,i}\right|\geq \sum_{\substack{j,k=1\\j\neq k}}^d\left|a_{j,k}^{\alpha,\beta,i}\right|,
\]
for every $j=1,\ldots,d$, $\alpha\in\mathcal{A}$, and $\beta\in\mathcal{B}$, for every $i=1,2$.
\end{assumption}

\begin{assumption}[Data of the problem]\label{assump_data}
We suppose $f\in L^\infty(\Omega)\cap C(\Omega)$ and $g\in C(\partial\Omega)$. 
\end{assumption}

Once our assumptions have been stated, we recall preliminary facts about the discretisation used in the paper. Define $\widetilde\Omega_h$ as 
\[
	\widetilde\Omega\coloneqq\left\lbrace x\in\mathbb{R}^d\,|\,{\rm dist}(x,\Omega)>h\right\rbrace. 
\]
For $0<h\ll1$, consider a uniform discretisation of $\widetilde\Omega$ of grid size $h$, denoted by $\widetilde\Omega_h$. Consider the Dirichlet problem
\begin{equation}\label{eq_pde1}
	\begin{cases}
		F(D^2u,Du,u,x)=f&\hspace{.2in}\mbox{in}\hspace{.1in}\Omega\\
		u=g&\hspace{.2in}\mbox{on}\hspace{.1in}\partial\Omega,
	\end{cases}
\end{equation}
where $F:S(d)\times\mathbb{R}^d\times\mathbb{R}\times\Omega\to\mathbb{R}$ is a $(\lambda,\Lambda)$-elliptic operator. We propose a numerical approximation of \eqref{eq_pde1} in $\widetilde\Omega_h$ of the form
\begin{equation}\label{eq_met1}
	\begin{cases}
		F_h(D^2_hu_h,D_hu_h,u_h,x)=f&\hspace{.2in}\mbox{in}\hspace{.1in}\widetilde\Omega_h\cap\Omega\\
		u=g&\hspace{.2in}\mbox{on}\hspace{.1in}\widetilde\Omega_h\setminus\Omega.
	\end{cases}
\end{equation}
In \eqref{eq_met1}, $F_h(\cdot,\cdot,\cdot,x)$ accounts for the restriction of $G$ to $\widetilde\Omega_h\cap\Omega$, whereas $D_h^2v$ and $D_hv$ stand, respectively, for a discrete approximation of the Hessian and the gradient of the function $v$. The unknown $u_h:\widetilde\Omega_h\to\mathbb{R}^d$ is a grid function satisfying \eqref{eq_met1} in $\widetilde\Omega_h$. The discretization \eqref{eq_met1} is written as
\begin{equation}\label{eq_met2}
	G_h(u_h,x)\coloneqq
		\begin{cases}
			F_h(D^2_hu_h,D_hu_h,u_h,x)-f&\hspace{.2in}\mbox{in}\hspace{.1in}\widetilde\Omega_h\cap\Omega\\
			u-g&\hspace{.2in}\mbox{on}\hspace{.1in}\widetilde\Omega_h\setminus\Omega.
		\end{cases}
\end{equation}

\begin{remark}[Extended domain]\label{rem_extdom}\normalfont
We prescribe \eqref{eq_met1} and \eqref{eq_met2} in the extended domain $\widetilde\Omega_h$ to account for the geometry of $\partial\Omega$. Since we work with a uniform grid of size $h$, it might be the case that in a vicinity of $\partial\Omega$ the grid size skips boundary points. Therefore, by prescribing $u=g$ in $\widetilde \Omega_h\setminus\Omega$, one ensures the boundary data is verified as $h\to 0$. Although we prescribe \eqref{eq_met1} and \eqref{eq_met2} in the extended domain, in the remainder of the paper, we work in $\overline\Omega_h$ for ease of presentation.
\end{remark}

For completeness, we include the definitions of monotone, stable and consistent numerical schemes. 

\begin{definition}[Monotonicity]\label{def_monotonicity}
The numerical method $(G_h)_{0<h\ll1}$ is monotone if, for every $0<h\ll1$, and every $u_h,v_h:\overline\Omega_h\to\mathbb{R}$ satisfying $u_h(x)=v_h(x)$, for some $x\in \overline\Omega_h$, with $u_h\leq v_h$, we have
\[
	G_h(v_h(x),x)\leq G_h(u_h(x),x).
\]
\end{definition}

\begin{definition}[Stability]\label{def_stability}
The numerical method $(G_h)_{0<h\ll 1}$ is stable if there exists $C>0$ such that 
\[
	\sup_{0<h\ll1}\max_{x\in\overline\Omega_h}\left|u_h(x)\right|\leq C.
\]
The constant $C>0$ is allowed to depend on the data of the problem, but not on the grid size $0<h\ll1$.
\end{definition}
To define consistency, it is useful to recall the notion of upper and lower envelopes associated with a function $G:S(d)\times\mathbb{R}^d\times\mathbb{R}\times\overline\Omega\to\mathbb{R}$. Set 
\begin{equation}\label{eq_pde11}
	G(M,p,r,x)\coloneqq
		\begin{cases}
			F(M,p,r,x)-f(x)&\hspace{.2in}\mbox{if}\hspace{.1in}x\in\Omega\\
			r-g(x)&\hspace{.2in}\mbox{if}\hspace{.1in}x\in\partial\Omega.
		\end{cases}
\end{equation}

\begin{definition}[Upper and lower envelopes]\label{def_ule}
We define the upper envelope $G^*$ of \eqref{eq_pde11} as
\[
	G^*(M,p,r,x_0)\coloneqq \limsup_{x\to x_0}G(M,p,r,x).	
\]
The lower envelope $G_*$ associated with \eqref{eq_pde11} is
\[
	G_*(M,p,r,x_0)\coloneqq \liminf_{x\to x_0}G(M,p,r,x).	
\]
\end{definition}

Now, we define the consistency of a numerical method in terms of the envelopes associated with the Dirichlet problem it approximates.

\begin{definition}[Consistency]\label{def_consistency}
The numerical method $(G_h)_{0<h<h_0}$ is consistent with \eqref{eq_pde1} if
\[
	\limsup_{\substack{h\to 0\\y\to x\\\xi\to 0}}G_h(\varphi(y)+\xi,y)\leq G^*(D^2\varphi(x), D\varphi(x), \varphi(x),x)
\] 
and
\[
	\liminf_{\substack{h\to 0\\y\to x\\\xi\to 0}}G_h(\varphi(y)+\xi,y)\geq G_*(D^2\varphi(x), D\varphi(x), \varphi(x),x),
\] 
whenever $\varphi\in C^\infty(\overline\Omega)$ and $x\in \overline\Omega$.
\end{definition}

In the sequel, we recall a characterisation of convergence for numerical methods approximating viscosity solutions. It relies on the monotonicity, stability and consistency of the numerical method; see the seminal work of Barles and Souganidis \cite{BarSou}.

\begin{proposition}[Convergence of the numerical method]\label{prop_bs91}
Let $(G_h)_{0<h\ll1}$ be a monotone and stable numerical method. Let $(u_h)_{0<h\ll1}$ be a family of grid functions such that $u_h:\overline\Omega_h\to\mathbb{R}$ solves
\[
	G_h(u_h(x),x)=0
\]
for every $x\in\overline\Omega_h$. $(G_h)_{0<h\ll1}$ is consistent with \eqref{eq_pde1}, then $u_h\to u$ locally uniformly in $\Omega$, where $u\in C(\Omega)$ is a viscosity solution to \eqref{eq_pde1}. 
\end{proposition}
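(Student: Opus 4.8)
The plan is to deploy the half-relaxed limits method of Barles and Souganidis. Stability (Definition \ref{def_stability}) furnishes a uniform bound on the family $(u_h)_{0<h\ll1}$, so the upper and lower half-relaxed limits
\[
	\overline{u}(x)\coloneqq\limsup_{\substack{h\to0\\y\to x}}u_h(y),
	\qquad
	\underline{u}(x)\coloneqq\liminf_{\substack{h\to0\\y\to x}}u_h(y),
\]
with $y$ ranging over $\overline\Omega_h$, are finite. Moreover $\overline{u}$ is upper semicontinuous, $\underline{u}$ is lower semicontinuous, and $\underline{u}\le\overline{u}$ by construction. The argument then proceeds in three steps: first, show that $\overline{u}$ is a viscosity subsolution and $\underline{u}$ a viscosity supersolution of \eqref{eq_pde1}; second, invoke the comparison principle for \eqref{eq_pde1} to obtain the reverse inequality $\overline{u}\le\underline{u}$; third, conclude that $u\coloneqq\overline{u}=\underline{u}\in C(\Omega)$ is a viscosity solution and that the coincidence of the two relaxed limits upgrades the convergence $u_h\to u$ to be locally uniform.

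For the subsolution property, which is the heart of the matter, I would fix $\varphi\in C^\infty(\overline\Omega)$ and a point $x_0\in\Omega$ at which $\overline{u}-\varphi$ has a strict local maximum, normalised so that $\overline{u}(x_0)=\varphi(x_0)$. By the definition of $\overline{u}$, a standard localisation yields grid points $x_h\to x_0$ at which $u_h-\varphi$ attains a local maximum over $\overline\Omega_h$, with $\xi_h\coloneqq u_h(x_h)-\varphi(x_h)\to0$. Thus $u_h\le\varphi+\xi_h$ near $x_h$ with equality at $x_h$; since the scheme has a finite (centred-difference) stencil, for small $h$ this inequality holds on the whole stencil, and Definition \ref{def_monotonicity} applies with $v_h=\varphi+\xi_h$ to give
\[
	G_h(\varphi(x_h)+\xi_h,x_h)\le G_h(u_h(x_h),x_h)=0.
\]
Taking the $\liminf$ as $h\to0$ and using the consistency inequality of Definition \ref{def_consistency} yields $G_*\big(D^2\varphi(x_0),D\varphi(x_0),\varphi(x_0),x_0\big)\le0$, which is exactly the subsolution condition. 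The supersolution property of $\underline{u}$ is proved symmetrically, working at local minima and using the upper envelope $G^*$ together with the companion consistency inequality.

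With $\overline{u}$ a subsolution and $\underline{u}$ a supersolution in hand, the comparison principle available for \eqref{eq_pde1} forces $\overline{u}\le\underline{u}$ in $\Omega$. Combined with $\underline{u}\le\overline{u}$, this gives $\overline{u}=\underline{u}=:u$, so $u$ is simultaneously upper and lower semicontinuous, hence continuous, and is a viscosity solution of \eqref{eq_pde1}. Finally, the standard fact that a uniformly bounded family whose upper and lower relaxed limits coincide with a continuous function converges locally uniformly to that function delivers $u_h\to u$ locally uniformly in $\Omega$.

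The main obstacle I anticipate is twofold. The delicate technical point in the first step is the passage from the global comparison form of monotonicity in Definition \ref{def_monotonicity} to its use at a local extremum on the grid; this is handled by exploiting the finite stencil of the centred-difference operator $D_h^2$, so that only local values of $u_h$ and $\varphi+\xi_h$ enter $G_h$ at $x_h$. The more substantive point is the second step: collapsing the relaxed limits requires a comparison principle for the limiting Dirichlet problem \eqref{eq_pde1}, which is where the structural hypotheses genuinely enter; in the application to \eqref{eq_nm1/2} this is guaranteed by the strictly monotone zeroth-order term $\varepsilon r$ in $F_\varepsilon^v$. A related subtlety is that near $\partial\Omega$ the boundary data is attained only in the generalised viscosity sense, which is precisely why the conclusion is phrased as local uniform convergence in the interior $\Omega$.
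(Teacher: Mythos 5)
Your proof is the standard Barles--Souganidis half-relaxed-limits argument, which is precisely what the paper relies on: it offers no proof of its own and simply refers the reader to \cite[Theorem 2]{BarSou}. The two caveats you flag are exactly the right ones --- the passage from the global form of monotonicity in Definition \ref{def_monotonicity} to its use at a grid-local maximum, and the need for a comparison principle for the limit problem (supplied in the application to \eqref{eq_nm1/2} by the strictly monotone term $\varepsilon r$) in order to collapse $\overline{u}$ and $\underline{u}$.
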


For the proof of Proposition \ref{prop_bs91}, we refer the reader to \cite[Theorem 2]{BarSou}. We conclude this section with a discussion on the solvability of $G_h(u_h,x)=0$, as outlined in \cite{Obe}. Consider an Euler operator of the form
\[
	S(\rho,h,u_h,x)\coloneqq u_h-\rho G_h(u_h,x),
\]
where $0<\rho\ll1$ is a small parameter related to $h$ through a (CFL) condition. The latter depends on the ellipticity of $F$, the dimension $d$ and the data of the problem. Under a CFL condition, one proves that $S$ is a contraction, which ensures the existence of a unique fixed point in an appropriate Banach space. This is tantamount to the existence of a (unique) solution to $G_h=0$. This strategy is implemented in our numerical examples; see Section \ref{sec_examples}.

\section{A finite difference method}\label{sec_method}

We propose a numerical method based on the strategy implemented to prove the existence of solutions to \eqref{eq_main1}; see \cite[Theorems 1 and 2]{PimSwi}. For $0<h\ll1$, denote with $\overline\Omega_h$ a discrete approximation of $\overline\Omega$. Let $N(h)$ stand for the cardinality of $\overline\Omega_h$. When useful, we identify $\overline\Omega_h$ with a point in $\mathbb{R}^{d\times N(h)}$. We also consider an enumeration for the points in $\overline\Omega_h$, given by
\[
	\left(x_1,x_2,\ldots,x_{N(h)}\right)\in\mathbb{R}^{d\times N(h)}.
\]

Fix $0<\varepsilon\ll1$ and let $v:\overline\Omega_h\to\mathbb{R}$. Notice that $v(\overline\Omega_h)\in\mathbb{R}^{N(h)}$; for simplicity, we sometimes write
\[
	v=\left(v_1,\ldots,v_{N(h)}\right)=\left(v(x_1),\ldots,v(x_{N(h)})\right),
\]
where we have used the enumeration of the points in $\overline\Omega_h$.
We define $h_\varepsilon^v:\Omega_h\to\mathbb{R}$ as
\[
	h_\varepsilon^v(x)\coloneqq\max\left(\min\left(\frac{v(x)+\varepsilon}{2\varepsilon},1\right),0\right).
\]
Consider the operator $F_{\varepsilon,h}^v:\mathbb{R}^{d\times d}\times\mathbb{R}\times\Omega_h\to\mathbb{R}$ given by
\[
	F^v_{\varepsilon,h}(M,r,x)\coloneqq\varepsilon r+h_\varepsilon^v(x)F_1(M)+(1-h_\varepsilon^v(x))F_2(M)-f(x).
\]
We denote the discretisation of the Hessian with
\[
	D^2_h w(x)\coloneqq\left(\partial^2_{x_i,x_j}w(x)\right)_{i,j=1}^d,
\]
where
\[
	\partial^2_{x_i,x_i}w_h(x)\coloneqq\frac{w_h(x+he_i)+w_h(x-he_i)-2w_h(x)}{h^2}
\]
and
\[
	\begin{split}
		\partial^2_{x_i,x_j}w_h(x)&\coloneqq \frac{-2w_h(x) + u_h(x + e_i h - e_j h) + w_h(x - e_i h + e_j h)}{2h^2}\\
			&\quad+ \frac{w_h(x + e_i h) + w_h(x - e_i h) + w_h(x + e_j h) + w_h(x - e_j h)}{2h^2}.
	\end{split}
\]
Working under the Assumption A\ref{assump_diagonal}, we obtain
\[
	F_i(D^2_hu_h(x))\leq F_i(D^2_hw_h(x))
\]
whenever $w_h\leq u_h$. We study the numerical method given by
\begin{equation}\label{eq_method}
	G_{\varepsilon,h}^v(u_h,x)\coloneqq
		\begin{cases}
			F_{\varepsilon,h}^v(D^2u_h,u_h,x)-f(x)&\hspace{.1in}\mbox{if}\hspace{.1in}x\in\Omega_h\\
			u_h(x)-g(x)&\hspace{.1in}\mbox{if}\hspace{.1in}x\in\partial\Omega_h,
		\end{cases}
\end{equation}
where the unknown $u_h:\overline\Omega_h\to\mathbb{R}$ is a function whose image $u_h(\overline\Omega_h)$ has cardinality $N(h)$. As before, $u_h(\overline\Omega_h)\in \mathbb{R}^{N(h)}$ and we write
\[
	\left(u_h^1,u_h^2,\ldots,u_h^{N(h)}\right)\coloneqq\left(u_h(x_1),u_h(x_2),\ldots,u_h(x_{N(h)})\right),
\]
using once again the enumeration of the points in $\overline\Omega_h$. We proceed by verifying that $G_{\varepsilon,h}^v$ is monotone for every $0<\varepsilon,h\ll1$ and every $v\in \mathbb{R}^{N(h)}$.

\begin{proposition}[Monotonicity]\label{prop_monotone}
Suppose Assumptions A\ref{assump_diagonal} and A\ref{assump_data} hold. Then $G_{\varepsilon,h}^v$ is monotone for every $0<\varepsilon,h\ll1$ and every $v\in \mathbb{R}^{N(h)}$.
\end{proposition}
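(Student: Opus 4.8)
The plan is to verify Definition \ref{def_monotonicity} directly, distinguishing the two regimes built into the scheme \eqref{eq_method}. Fix the functional parameter $v$ and, to avoid a clash with it, denote by $u_h,w_h\colon\overline\Omega_h\to\mathbb{R}$ two grid functions with $u_h\le w_h$ on $\overline\Omega_h$ and $u_h(x_0)=w_h(x_0)$ at some point $x_0\in\overline\Omega_h$. What must be shown is $G_{\varepsilon,h}^v(w_h,x_0)\le G_{\varepsilon,h}^v(u_h,x_0)$. The key structural point, to be exploited throughout, is that the weight $h_\varepsilon^v(x_0)$ is computed from the \emph{fixed} parameter $v$, so it is a common coefficient in both evaluations and, by its definition as a truncated min–max, it lies in $[0,1]$.

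First I would treat the boundary case $x_0\in\partial\Omega_h$, which is immediate: there $G_{\varepsilon,h}^v(w_h,x_0)=w_h(x_0)-g(x_0)=u_h(x_0)-g(x_0)=G_{\varepsilon,h}^v(u_h,x_0)$ by the touching condition, so equality (in particular $\le$) holds.

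The interior case $x_0\in\Omega_h$ carries the content. Here I would form the difference and use two cancellations: the zeroth-order term $\varepsilon r$ contributes $\varepsilon\bigl(w_h(x_0)-u_h(x_0)\bigr)=0$ because of the touching condition, and the $-f(x_0)$ terms are independent of the grid function and cancel. What remains is
\[
G_{\varepsilon,h}^v(w_h,x_0)-G_{\varepsilon,h}^v(u_h,x_0)=h_\varepsilon^v(x_0)\bigl[F_1(D^2_hw_h(x_0))-F_1(D^2_hu_h(x_0))\bigr]+\bigl(1-h_\varepsilon^v(x_0)\bigr)\bigl[F_2(D^2_hw_h(x_0))-F_2(D^2_hu_h(x_0))\bigr].
\]
Applying the order-reversal $F_i(D^2_hw_h(x_0))\le F_i(D^2_hu_h(x_0))$ (recorded just before \eqref{eq_method}) with the global inequality $u_h\le w_h$, each bracket is non-positive; since $h_\varepsilon^v(x_0)\in[0,1]$, the right-hand side is a convex combination of two non-positive quantities, hence $\le 0$. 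This yields $G_{\varepsilon,h}^v(w_h,x_0)\le G_{\varepsilon,h}^v(u_h,x_0)$ and closes the interior case.

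The only genuinely nontrivial ingredient is that order-reversal $F_i(D^2_hw_h)\le F_i(D^2_hu_h)$ for $u_h\le w_h$, which rests on the diagonal dominance of Assumption A\ref{assump_diagonal}: writing $F_i$ as the Isaacs sup–inf of traces $\mathrm{Tr}\bigl(A^i_{\alpha,\beta}D^2_h\,\cdot\bigr)$ and expanding the centred differences, the negative semidefiniteness and diagonal dominance of each $A^i_{\alpha,\beta}$ guarantee that the mixed-difference (off-diagonal) contributions cannot overturn the favourable sign of the diagonal ones, so every trace is order-reversing in the grid function, and this sign is preserved under the infimum and supremum. As this fact is already available in the excerpt, the remaining work is the sign bookkeeping above; the main points to get right are that $h_\varepsilon^v$ is evaluated with the fixed parameter $v$ (so it is a common, $[0,1]$-valued weight) and that the touching condition is exactly what annihilates the $\varepsilon r$ term, leaving only the order-reversing second-order part.
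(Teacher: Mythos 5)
Your proof is correct and follows essentially the same route as the paper's: split into the boundary case (where the touching condition gives equality) and the interior case (where the order-reversal $F_i(D^2_hw_h)\le F_i(D^2_hu_h)$ recorded before \eqref{eq_method}, together with the fixed $[0,1]$-valued weight $h_\varepsilon^v$ and the cancellation of the $\varepsilon r$ term at the touching point, yields the desired inequality). The paper compresses the interior case into a single asserted inequality $F_{\varepsilon,h}^v(D^2u_h,u_h,x)\ge F_{\varepsilon,h}^v(D^2w_h,w_h,x)$, whereas you spell out the convex-combination bookkeeping explicitly; the content is the same.
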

\begin{proof}
Let $u_h,w_h:\overline\Omega_h\to\mathbb{R}$ be such that $u_h(x)=w_h(x)$ for some $x\in\overline\Omega_h$, with $u_h\leq w_h$ in $\overline\Omega_h$. We must verify that $G_{\varepsilon,h}^v(u_h,x)\geq G_{\varepsilon,h}^v(w_h,x)$.

If $x\in \partial\Omega_h$, we have
\[
	G_{\varepsilon,h}^v(u_h,x)=u_h(x)-g(x)=w_h(x)-g(x)=G_{\varepsilon,h}^v(w_h,x),
\]
and the inequality follows.

Otherwise, suppose $x\in \Omega_h$. Then
\[
	\begin{split}
		G_{\varepsilon,h}^v(u_h,x)=F_{\varepsilon,h}^v(D^2u_h,u_h,x)\geq F_{\varepsilon,h}^v(D^2w_h,w_h,x)=G_{\varepsilon,h}^v(w_h,x),
	\end{split}
\]
and the result follows.
\end{proof}

We continue with the stability analysis for $G_{\varepsilon,h}^v$. To that end, we introduce barrier functions $\underline w$ and $\overline w$ and consider their restrictions to $\overline\Omega_h$. 

\begin{proposition}[Discrete global barriers]\label{prop_dgb}
Suppose Assumptions A\ref{assump_diagonal} and A\ref{assump_data} hold true. There exist $\underline w,\overline w:\overline\Omega\to\mathbb{R}$ such that 
\[
	G_{\varepsilon,h}^v(\underline w(x),x)\leq 0\leq G_{\varepsilon,h}^v(\overline w(x),x)
\]
for every $0<\varepsilon\ll1$, $v\in\mathbb{R}^{N(h)}$, and  $x\in\overline\Omega_h$, for all $0<h\ll 1$.
\end{proposition}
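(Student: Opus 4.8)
The plan is to exhibit explicit global barriers and reduce the verification to evaluating $F_1$ and $F_2$ at a single constant matrix. I would take $\overline w$ and $\underline w$ to be the quadratic polynomials
\[
	\overline w(x) := a + \tfrac{C}{2}\left(R^2 - |x|^2\right), \qquad \underline w(x) := -a - \tfrac{C}{2}\left(R^2 - |x|^2\right),
\]
defined on all of $\mathbb{R}^d$, where $R := \sup_{x\in\overline\Omega}|x| < \infty$ (so that $R^2 - |x|^2 \ge 0$ on $\overline\Omega$) and $a, C > 0$ are constants to be fixed. The decisive observation is that the centred second-difference stencils defining $D^2_h$ are exact on polynomials of degree at most two; hence $D^2_h \overline w(x) = -C\,I$ and $D^2_h \underline w(x) = +C\,I$ at every interior grid point, independently of $h$ and of the location $x$. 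This collapses the nonlinear, $x$-dependent quantities $F_i(D^2_h \overline w(x))$ into the fixed constants $F_i(\mp C I)$.

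With this reduction, I would bound $F_i(\mp CI)$ using uniform ellipticity (Assumption A\ref{assump_Felliptic}) together with $F_i(0)=0$, the latter being immediate from the Isaacs representation in Assumption A\ref{assump_diagonal}. Taking $M = -CI$ and $N = CI \ge 0$ in the ellipticity inequality gives $F_i(-CI) \ge \lambda\,\|CI\| = \lambda C\,\|I\|$; symmetrically, taking $M = 0$ and $N = CI$ yields $F_i(CI) \le -\lambda C\,\|I\|$. Since $h_\varepsilon^v(x) \in [0,1]$, the convex combination $h_\varepsilon^v(x)F_1(\cdot) + (1 - h_\varepsilon^v(x))F_2(\cdot)$ inherits the same one-sided bounds \emph{uniformly in $v$}. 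On $\Omega_h$ one then obtains, for the upper barrier,
\[
	G_{\varepsilon,h}^v(\overline w(x),x) \ge \varepsilon\,\overline w(x) + \lambda C\,\|I\| - \|f\|_{L^\infty(\Omega)},
\]
and the analogous reversed inequality for $\underline w$.

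It remains to fix the constants and to handle the zeroth-order term and the boundary. Because $\overline w \ge a \ge 0$ and $\underline w \le -a \le 0$ on $\overline\Omega$, the sign-definiteness of the $\varepsilon r$ term works in our favour: $\varepsilon\,\overline w \ge 0$ and $\varepsilon\,\underline w \le 0$, so this term can simply be discarded, which is precisely what makes the estimate uniform in $\varepsilon$. Choosing $C \ge \|f\|_{L^\infty(\Omega)}/(\lambda\|I\|)$ then forces $G_{\varepsilon,h}^v(\overline w,\cdot) \ge 0$ and $G_{\varepsilon,h}^v(\underline w,\cdot)\le 0$ on $\Omega_h$. On $\partial\Omega_h$, where $G_{\varepsilon,h}^v(w,x) = w(x) - g(x)$, we have $\overline w - g \ge a - \|g\|_{L^\infty(\partial\Omega)}$ and $\underline w - g \le -a + \|g\|_{L^\infty(\partial\Omega)}$, so taking $a \ge \|g\|_{L^\infty(\partial\Omega)}$ closes both inequalities. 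All constants depend only on $\lambda$, $R$, $\|f\|_{L^\infty(\Omega)}$ and $\|g\|_{L^\infty(\partial\Omega)}$, hence not on $\varepsilon$, $v$, or $h$, as required.

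The one genuinely delicate point is the claimed exactness $D^2_h \overline w = -C\,I$ for the \emph{mixed} second-difference stencil; I would verify it by a direct Taylor expansion, checking that the symmetric cross stencil annihilates every monomial of degree $\le 2$ except the intended $x_ix_j$ contribution, so that the remainder vanishes identically on quadratics. A secondary technical point is that the neighbours $x \pm h e_i$ invoked by the stencil may fall outside $\overline\Omega$; this causes no difficulty because the barriers are globally defined quadratics, consistent with the extended-grid convention of Remark \ref{rem_extdom}. Everything else is routine bookkeeping of the constants $a$ and $C$.
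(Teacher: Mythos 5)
Your proof is correct and follows essentially the same route as the paper: a quadratic barrier $C_1 - c|x|^2$ with constant discrete Hessian $-cI$, the uniform ellipticity bound $F_i(-cI)\ge \lambda c\,\|I\|$ (using $F_i(0)=0$ from the Isaacs form), the sign of the $\varepsilon r$ term, and constants chosen against $\|f\|_{L^\infty(\Omega)}$ and $\|g\|_{L^\infty(\partial\Omega)}$. You are in fact slightly more careful than the paper on two points it leaves implicit -- the exactness of the centred stencils on quadratics, and the explicit construction $\underline w = -\overline w$ (the paper's closing line ``set $\underline w \coloneqq \overline w$'' is evidently a typo).
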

\begin{proof}
We detail the case of $\overline w$, as the remaining one is completely analogous. For $x\in\overline\Omega_h$, set
\[
	\overline w(x)\coloneqq C_1-\frac{C_2}{2\lambda d}\left|x\right|^2\geq C_1-\frac{L^2}{2\lambda d},
\]
where $L>0$ is such that $\Omega\subset B_L$. Choose $C_1>0$ large enough such that $\overline w\geq \left\|g\right\|_{L^\infty(\partial\Omega)}$. For $x\in\partial\Omega_h$, such a choice ensures 
\[
	G_{\varepsilon,h}^v(\overline w(x),x)=\overline w(x)-g(x)\geq \left\|g\right\|_{L^\infty(\partial\Omega)}-g(x)\geq 0.
\]
For $x\in \Omega_h$, we get
\[
	\begin{split}
		G_{\varepsilon,h}^v(\overline w(x),x)&=\varepsilon\left(C_1-\frac{C_2}{2\lambda d}\left|x\right|^2\right)+h_\varepsilon^v(x)F_1\left(\frac{-C_2}{\lambda d}Id\right)\\
			&\quad+(1-h_\varepsilon^v(x))F_2\left(\frac{-C_2}{\lambda d}Id\right)-f(x)\\
			&\geq \left\|g\right\|_{L^\infty(\partial\Omega)}+C_2-f(x).
	\end{split}
\]
By choosing $C_2\geq \left\|f\right\|_{L^\infty(\Omega)}$ one completes the argument. By setting $\underline w\coloneqq \overline w$, one finishes the argument.
\end{proof}

The stability is the subject of the next proposition. Here, the discrete global barriers play a central role.

\begin{proposition}[Stability]\label{prop_stability}
Suppose Assumptions A\ref{assump_diagonal} and A\ref{assump_data} hold true. Let $0<h\ll1$ and $u_h:\overline\Omega_h\to\mathbb{R}$ be a solution to $G_{\varepsilon,h}^v(u_h,x)=0$. There exists a constant $C>0$ such that 
\[
	\sup_{0<h\ll1}\max_{x\in\overline\Omega_h}\left|u_h(x)\right|\leq C.
\]
Moreover, $C>0$ depends only on the dimension $d$, the ellipticity constants $\lambda$ and $\Lambda$, $\left\|f\right\|_{L^\infty(\Omega)}$ and $\left\|g\right\|_{L^\infty(\partial\Omega)}$.
\end{proposition}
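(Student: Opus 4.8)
The plan is to prove a two-sided bound $\underline w\leq u_h\leq\overline w$ on $\overline\Omega_h$, where $\overline w$ and $\underline w$ are the discrete global barriers produced by Proposition \ref{prop_dgb}. Since those barriers are explicit functions whose $L^\infty(\overline\Omega)$-norms are controlled solely by $d$, $\lambda$, $\Lambda$, $\left\|f\right\|_{L^\infty(\Omega)}$ and $\left\|g\right\|_{L^\infty(\partial\Omega)}$ (through the constants $C_1,C_2$ and the radius $L$ with $\Omega\subset B_L$), and are in particular independent of $h$ and of $v$, the stability estimate follows with $C\coloneqq\max\left\{\left\|\overline w\right\|_{L^\infty(\overline\Omega)},\left\|\underline w\right\|_{L^\infty(\overline\Omega)}\right\}$. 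The heart of the matter is therefore a discrete comparison principle between the solution $u_h$ of $G_{\varepsilon,h}^v(u_h,\cdot)=0$ and each barrier.

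First I would establish $u_h\leq\overline w$ by contradiction. Set $M\coloneqq\max_{x\in\overline\Omega_h}\left(u_h(x)-\overline w(x)\right)$, suppose $M>0$, and let $x_0\in\overline\Omega_h$ attain this maximum. The key observation is that the constant-shifted barrier $\overline w+M$ remains a \emph{strict} supersolution at $x_0$: a constant shift leaves the discrete Hessian unchanged, $D_h^2(\overline w+M)=D_h^2\overline w$, while the zeroth-order term contributes $\varepsilon M$, so that
\[
	G_{\varepsilon,h}^v(\overline w(x_0)+M,x_0)=G_{\varepsilon,h}^v(\overline w(x_0),x_0)+\varepsilon M\geq \varepsilon M>0
\]
when $x_0\in\Omega_h$, and likewise $G_{\varepsilon,h}^v(\overline w(x_0)+M,x_0)=G_{\varepsilon,h}^v(\overline w(x_0),x_0)+M\geq M>0$ when $x_0\in\partial\Omega_h$, using Proposition \ref{prop_dgb} in both cases. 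On the other hand, by the choice of $x_0$ we have $u_h\leq\overline w+M$ on $\overline\Omega_h$ with equality at $x_0$, so the monotonicity of $G_{\varepsilon,h}^v$ (Proposition \ref{prop_monotone}) yields $G_{\varepsilon,h}^v(\overline w(x_0)+M,x_0)\leq G_{\varepsilon,h}^v(u_h(x_0),x_0)=0$, contradicting the strict positivity above. Hence $M\leq 0$, i.e. $u_h\leq\overline w$.

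The lower bound $u_h\geq\underline w$ follows from the symmetric argument applied to $\min_{x\in\overline\Omega_h}\left(u_h(x)-\underline w(x)\right)$, observing that $\underline w-M$ is a strict subsolution at the minimiser. Combining the two inequalities gives $\left|u_h(x)\right|\leq C$ for every $x\in\overline\Omega_h$, uniformly in $h$, which is exactly the claimed estimate. I expect the only delicate point to be the correct exploitation of the strictly proper term $\varepsilon r$ in $F_{\varepsilon,h}^v$: it is precisely this term that upgrades the shifted barrier to a strict super-/subsolution and lets the monotone comparison close with a strict inequality. Without it, the constant shift would be comparison-neutral, and one would instead have to perturb the barrier by a small multiple of $\left|x\right|^2$ to break the tie. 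Checking that the comparison applies unchanged at boundary nodes, where $G_{\varepsilon,h}^v$ reduces to the algebraic constraint $u_h-g$, is routine.
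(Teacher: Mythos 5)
Your proof is correct and follows essentially the same route as the paper's: a discrete comparison of $u_h$ with the global barriers of Proposition \ref{prop_dgb} at the extremum of $u_h-\overline w$ (resp.\ $u_h-\underline w$), closed by the monotonicity of $G_{\varepsilon,h}^v$. Your version is in fact slightly more careful than the paper's, which derives the strict inequality from the maximum-point property (which only yields $u_h(x)-u_h(y)\geq \overline w(x)-\overline w(y)$), whereas you correctly locate the source of strictness in the proper term $\varepsilon r$ acting on the positive shift $M$.
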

\begin{proof}
To prove the proposition, it suffices to verify that 
\[
	\underline w(x)\leq u_h(x)\leq \overline w(x)
\]
for every $x\in \overline\Omega_h$, for every $h\in(0,1)$. We verify the second inequality, as the first one follows from an entirely analogous reasoning. 

We claim that $u_h\leq \overline w$ in $\overline\Omega_h$. Suppose otherwise; there exists $x\in\overline\Omega_h$ such that $\overline w(x)<u_h(x)$. If $x\in\partial\Omega_h$, we have
\[
	0=G_{\varepsilon,h}^v(u_h(x),x)=u_h(x)-g(x)>\overline w(x)-g(x)=G_{\varepsilon,h}^v(\overline w(x),x),
\]
which is a contradiction to $G_{\varepsilon,h}^v(\overline w(x),x)\geq 0$. Suppose $x\in \Omega_h$. In this case, suppose without loss of generality that $x$ is a maximum point for $u_h-\overline w$. That is,
\[
	u_h(x)-\overline w(x)\geq u_h(y)-\overline w(y)
\]
for every $y\in\Omega_h$. Then
\begin{equation}\label{eq_stability1}
	u_h(x)-u_h(y)> \overline w(x)-\overline w(y).
\end{equation}
Hence,
\[
	\begin{split}
		G_{\varepsilon,h}^v(u_h(x),x)&=F_{\varepsilon,h}^v(D^2_hu_h(x),u_h(x),x)\\
			&>F_{\varepsilon,h}^v(D_h^2\overline w(x),\overline w(x),x)\\
			&=G_{\varepsilon,h}^v(\overline w(x),x),
	\end{split}	
\]
where the strict inequality follows from \eqref{eq_stability1}. Because $u_h$ is a solution to $G_{\varepsilon,h}^v=0$, the former inequality yields a contradiction and proves the proposition.
\end{proof}

We proceed with a fixed-point analysis. Our goal is to show there exists $u_h:\overline\Omega_h\to\mathbb{R}$ such that 
\[
	G_{\varepsilon,h}^{u_h}(u_h(x),x)=0
\]
for every $x\in \overline\Omega_h$.

For the barriers $\underline w,\overline w:\overline \Omega\to\mathbb{R}$ and \emph{fixed} $0<h<1$, denote with $\mathcal{B}_h$ the set
\begin{equation}\label{eq_bh}
	\mathcal{B}_h\coloneqq\left\lbrace w\in\mathbb{R}^{N(h)}\,|\,\min_{i=1,\ldots,N(h)}\underline w(x_i)\leq w_n\leq  \max_{i=1,\ldots,N(h)}\overline w(x_i)\right\rbrace.
\end{equation}
Define also $T_h:\mathcal{B}_h\to\mathbb{R}^{N(h)}$ as follows. For $v\in \mathcal{B}_h$, $Tv$ is the unique solution to $G_{\varepsilon,h}^v=0$ in $\overline\Omega_h$. It is clear that $\mathcal{B}_h$ is closed and convex. We continue with the properties of the map $T$.

\begin{proposition}[Properties of the map $T$]\label{prop_T}
Let $0<h<h_0$ be fixed and define $\mathcal{B}_h$ as in \eqref{eq_bh}. Then $T(\mathcal{B}_h)\subset \mathcal{B}_h$. Additionally, $T$ is continuous and precompact.
\end{proposition}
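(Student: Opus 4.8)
The plan is to establish the three claims in sequence, relying on the stability estimate of Proposition~\ref{prop_stability} and on the fact that we work in the finite-dimensional space $\mathbb{R}^{N(h)}$. For the invariance $T(\mathcal{B}_h)\subset\mathcal{B}_h$, the decisive observation is that the barriers $\underline w$ and $\overline w$ furnished by Proposition~\ref{prop_dgb} do not depend on $v$. Hence, for any $v\in\mathcal{B}_h$, the unique solution $u_h\coloneqq Tv$ of $G_{\varepsilon,h}^v=0$ obeys $\underline w(x)\leq u_h(x)\leq\overline w(x)$ at every grid point $x\in\overline\Omega_h$, by Proposition~\ref{prop_stability}. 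Reading this off at each enumeration point and using $\min_j\underline w(x_j)\leq\underline w(x_i)$ together with $\overline w(x_i)\leq\max_j\overline w(x_j)$, I conclude that every component of $u_h$ lies in the interval defining $\mathcal{B}_h$ in \eqref{eq_bh}, so $u_h\in\mathcal{B}_h$.

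For continuity I would argue sequentially. Take $v_n\to v$ in $\mathcal{B}_h$ and put $u_n\coloneqq Tv_n$. The map $v\mapsto h_\varepsilon^v(x)$ is continuous, being the composition of an affine map with the $1$-Lipschitz clamping $r\mapsto\max(\min(r,1),0)$, so $h_\varepsilon^{v_n}(x)\to h_\varepsilon^v(x)$ for each $x\in\Omega_h$. Since every $u_n$ lies in the compact set $\mathcal{B}_h$, any subsequence of $(u_n)$ admits a further subsequence $u_{n_k}\to z\in\mathcal{B}_h$. Because the discrete Hessian $D_h^2$ is a fixed linear map and, by uniform ellipticity (Assumption~A\ref{assump_Felliptic}), $F_1$ and $F_2$ are Lipschitz, I may pass to the limit in $G_{\varepsilon,h}^{v_{n_k}}(u_{n_k},x)=0$ and obtain $G_{\varepsilon,h}^v(z,x)=0$. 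Uniqueness of the solution to $G_{\varepsilon,h}^v=0$ then forces $z=Tv$. As every subsequence of $(u_n)$ has a further subsequence converging to the common limit $Tv$, the whole sequence converges to $Tv$, which is continuity.

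Precompactness is then immediate: the invariance just proved gives $T(\mathcal{B}_h)\subset\mathcal{B}_h$, and $\mathcal{B}_h$ is a closed, bounded box in $\mathbb{R}^{N(h)}$, hence compact; thus $\overline{T(\mathcal{B}_h)}$ is compact and $T$ is precompact. I expect the continuity step to be the only genuine obstacle. There, the work lies in tracking the dependence on $v$, which enters exclusively through the coefficient $h_\varepsilon^v$, and in invoking the uniqueness of the discrete solution to identify the subsequential limit $z$ as $Tv$; the finite dimensionality renders both the compactness extraction and the final precompactness claim routine.
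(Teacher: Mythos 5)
Your proposal is correct and follows essentially the same route as the paper's proof: invariance via the $v$-independent barriers and Proposition~\ref{prop_stability}, continuity via compactness in $\mathbb{R}^{N(h)}$, passage to the limit in the discrete equation, and identification of the limit through uniqueness of the solution to $G_{\varepsilon,h}^v=0$, and precompactness from boundedness in finite dimensions. Your write-up is in fact slightly more careful than the paper's in justifying the limit passage (Lipschitz continuity of $F_1,F_2$ and of the clamping defining $h_\varepsilon^v$) and in invoking the subsequence principle to upgrade subsequential convergence to full convergence.
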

\begin{proof}
For the sake of clarity, we split the proof into three steps. First, we address the invariance of $\mathcal{B}_h$ under the map $T$.

\smallskip

\noindent{\bf Step 1 - }Let $v\in\mathcal{B}_h$. Then $Tv$ is the unique solution to $G_{\varepsilon,h}^v=0$. Proposition \ref{prop_stability} ensures that $\underline w(x)\leq Tv(x)\leq \overline w(x)$, for every $x\in \overline\Omega_h$. Hence, 
\[
	\min_{j=1,\ldots,N(h)}\underline w(x_j)\leq Tv(x_i)\leq  \max_{j=1,\ldots,N(h)}\overline w(x_j)
\]
for every $i=1,\ldots,N(h)$. We conclude that $Tv\in \mathcal{B}_h$.

\smallskip

\noindent{\bf Step 2 - }To verify that $T$ is continuous, we let $(v_n)_{n\in\mathbb{N}}$ and suppose there exists $v_\infty\in \mathcal{B}_h$ such that $v_n\to v_\infty$, as $n\to\infty$. We claim that $Tv_n\to Tv_\infty$, as $n\to\infty$.

Indeed, denote with $u_{\varepsilon,h}^{n}$ the unique solution to $G_{\varepsilon,h}^{v_n}$. The sequence $(u_n)_{n\in\mathbb{N}}$ is uniformly bounded, because of Proposition \ref{prop_stability}. Compactness in $\mathbb{R}^{N(h)}$ ensures the existence of a subsequence $(u_{n_k})_{k\in\mathbb{N}}$, converging to a function $\overline u\in\mathcal{B}_h$. Now, take $x\in \overline \Omega_h$. If $x\in \partial\Omega_h$, we have
\begin{equation}\label{eq_conT1}
	0=G_{\varepsilon,h}^{v_{n_k}}(u_{n_k}(x),x)=u_{n_k}(x)-g(x)\to\overline u(x)-g(x).
\end{equation}
If $x\in\Omega_h$, we have 
\begin{equation}\label{eq_conT2}
	0=G_{\varepsilon,h}^{v_{n_k}}(u_{n_k}(x),x)\to h_\varepsilon^{v_\infty}(x)F_1(D_h^2\overline u(x))+(1-h_\varepsilon^{v_\infty}(x))F_2(D_h^2\overline u(x))
\end{equation}
Combining \eqref{eq_conT1} and \eqref{eq_conT2}, one obtains
\[
	G_{\varepsilon,h}^{v_\infty}(\overline u(x),x)=0
\]
for every $x\in \overline\Omega_h$. The uniqueness of solutions to $G_{\varepsilon,h}^{v_\infty}=0$ implies that 
\[
	Tv_\infty=\overline u=\lim_{n\to\infty}Tv_n,
\]
independently of the subsequence $(u_{n_k})_{k\in\mathbb{N}}$, ensuring the continuity of $T$. 

\smallskip

\noindent{\bf Step 3 - }It remains to verify that $T$ is precompact. Let $(Tv_n)_{n\in\mathbb{N}}\subset T(\mathcal{B}_h)$. Once again, stability (Proposition \ref{prop_stability}) ensures that $(Tv_n)_{n\in\mathbb{N}}$ is a uniformly bounded subset of $\mathbb{R}^{N(h)}$. Clearly, it admits a convergent subsequence, and the proof is complete.
\end{proof}

A corollary of Proposition \ref{prop_T} is the existence of $u_h\in \mathcal{B}_h$ satisfying
\begin{equation}\label{eq_Gpf}
	G_{\varepsilon,h}^{u_h}(u_h(x),x)=0
\end{equation}
fo every $x\in \overline\Omega_h$.

\begin{corollary}\label{cor_pf}
Let $0<h<h_0$ be fixed and define $\mathcal{B}_h$ as in \eqref{eq_bh}. There exists $u_h\in \mathcal{B}_h$ satisfying \eqref{eq_Gpf} for every $x\in \overline\Omega_h$.
\end{corollary}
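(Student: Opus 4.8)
The plan is to read this off as an immediate application of a topological fixed-point theorem, since all the substantive work has already been done in Proposition \ref{prop_T}. First I would observe that the set $\mathcal{B}_h$ defined in \eqref{eq_bh} is a nonempty, closed, bounded, and convex subset of $\mathbb{R}^{N(h)}$; indeed it is simply the coordinate box
\[
	\prod_{i=1}^{N(h)}\left[\min_{j}\underline w(x_j),\,\max_{j}\overline w(x_j)\right],
\]
which is compact because we are in a finite-dimensional space. Since $\underline w\leq\overline w$ pointwise on $\overline\Omega$ (by Proposition \ref{prop_dgb}), this box is nonempty.

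Next I would invoke Proposition \ref{prop_T}, which supplies exactly the three hypotheses required for Schauder's Fixed Point Theorem: the map $T_h:\mathcal{B}_h\to\mathbb{R}^{N(h)}$ leaves $\mathcal{B}_h$ invariant, i.e.\ $T(\mathcal{B}_h)\subset\mathcal{B}_h$, it is continuous, and it is precompact. (In this finite-dimensional setting compactness of $\mathcal{B}_h$ together with continuity of $T$ already places us within the scope of Brouwer's theorem, but Schauder's theorem is the tool announced in the introduction and applies verbatim.) Applying the theorem yields a point $u_h\in\mathcal{B}_h$ with $T u_h=u_h$.

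Finally I would unwind the definition of $T$ to conclude \eqref{eq_Gpf}. By construction, for any $v\in\mathcal{B}_h$ the image $Tv$ is the \emph{unique} solution of $G_{\varepsilon,h}^{v}(\,\cdot\,,x)=0$ on $\overline\Omega_h$; this solution exists and is unique by the solvability discussion together with Propositions \ref{prop_monotone} and \ref{prop_stability}. Specialising to the fixed point $v=u_h$, the identity $T u_h=u_h$ means precisely that $u_h$ itself is the solution of $G_{\varepsilon,h}^{u_h}(\,\cdot\,,x)=0$, that is,
\[
	G_{\varepsilon,h}^{u_h}(u_h(x),x)=0\qquad\text{for every }x\in\overline\Omega_h,
\]
which is the assertion of the corollary. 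I do not anticipate any genuine obstacle here: the entire difficulty was absorbed into verifying invariance, continuity, and precompactness in Proposition \ref{prop_T}. The only point demanding a little care is the self-referential reading of the fixed-point relation, namely recognising that replacing the frozen functional parameter $v$ by the fixed point $u_h$ converts the linear-in-$v$ problem $G_{\varepsilon,h}^{v}=0$ into the genuinely nonlinear coupled equation \eqref{eq_Gpf}.
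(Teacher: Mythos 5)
Your proposal is correct and follows essentially the same route as the paper: the authors likewise dispose of the corollary in one line by combining Proposition \ref{prop_T} with Schauder's Fixed Point Theorem. The extra details you supply (compactness and convexity of the box $\mathcal{B}_h$, the remark that Brouwer's theorem already suffices in finite dimensions, and the unwinding of $Tu_h=u_h$ into \eqref{eq_Gpf}) are all consistent with, and merely elaborate on, the paper's argument.
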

\begin{proof}
The corollary follows from Proposition \ref{prop_T} combined with Schauder's Fixed Point Theorem; see, for instance \cite[Theorem 3, Section 9.2.2]{Eva1998}.
\end{proof}

The previous corollary ensures that, for every $0<\varepsilon,h\ll1$, there exists $u_h:\overline\Omega_h\to\mathbb{R}$ such that \eqref{eq_Gpf} is satisfied. Our first goal is to take the limit $h\to 0$. Since we have already verified that $G_{\varepsilon,h}^{u_h}$ is monotone and stable, it remains to prove it is consistent with \eqref{eq_nm1/2}.

\begin{proposition}[Consistency of $G_{\varepsilon,h}^{u_h}$]\label{prop_consistency}
Let $G_{\varepsilon,h}^v$ be defined as in \eqref{eq_method}. Suppose assumptions A\ref{assump_diagonal} and A\ref{assump_data} are in force. For $u\in C(\overline\Omega)$, the method $G_{\varepsilon,h}^{u}$ is consistent with 
\begin{equation}\label{eq_nm1/3}
	\begin{cases}
		F_{\varepsilon}^u(D^2u,u,x)=f&\hspace{.2in}\mbox{in}\hspace{.1in}\Omega\\
		u=g&\hspace{.2in}\mbox{in}\hspace{.1in}\partial\Omega.
	\end{cases}
\end{equation}
\end{proposition}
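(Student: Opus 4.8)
The plan is to verify directly the two inequalities of Definition \ref{def_consistency}, splitting the analysis according to whether the base point $x$ lies in the interior $\Omega$ or on the boundary $\partial\Omega$. The guiding observation is that the centred second differences defining $D_h^2$ are consistent with the continuous Hessian and annihilate additive constants; consequently, the shift by $\xi$ enters $G_{\varepsilon,h}^u(\varphi(y)+\xi,y)$ only through the continuous zeroth-order term $\varepsilon(\varphi(y)+\xi)$ (at interior nodes) and the boundary term $\varphi(y)+\xi-g(y)$ (at boundary nodes), both harmless as $\xi\to0$. The main engine of the interior estimate will be a Taylor expansion of the smooth test function.

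First I would treat the interior case. Fix $x\in\Omega$. Because $\Omega$ is open, for $h$ small and $y$ sufficiently close to $x$ the entire stencil defining $D_h^2$ lies in $\Omega_h$, so $G_{\varepsilon,h}^u(\varphi(y)+\xi,y)=F_\varepsilon^u(D_h^2\varphi(y),\varphi(y)+\xi,y)-f(y)$. A second-order Taylor expansion of $\varphi\in C^\infty(\overline\Omega)$ gives $D_h^2\varphi(y)=D^2\varphi(y)+O(h^2)$, and continuity of $D^2\varphi$ yields $D_h^2\varphi(y)\to D^2\varphi(x)$ as $h\to0$ and $y\to x$. Since $F_1$ and $F_2$ are $(\lambda,\Lambda)$-elliptic (Assumption A\ref{assump_Felliptic}) they are continuous in $M$; since $u\in C(\overline\Omega)$ the clamp $h_\varepsilon^u$ is continuous, so $h_\varepsilon^u(y)\to h_\varepsilon^u(x)$; and $f\in C(\Omega)$ together with continuity of $\varphi$ control the remaining terms. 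Combining these limits gives
\[
	\lim_{\substack{h\to0\\y\to x\\\xi\to0}}G_{\varepsilon,h}^u(\varphi(y)+\xi,y)=F_\varepsilon^u(D^2\varphi(x),\varphi(x),x)-f(x).
\]
At interior points the operator \eqref{eq_pde11} is continuous in $x$, so $G^*=G_*=G(D^2\varphi(x),D\varphi(x),\varphi(x),x)$ equals this limit, and both consistency inequalities hold with equality.

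Next I would handle the boundary case, $x_0\in\partial\Omega$, where the envelopes genuinely intervene. Unwinding Definition \ref{def_ule} for \eqref{eq_pde11} gives
\[
	G^*(M,p,r,x_0)=\max\left\{r-g(x_0),\ \limsup_{\substack{x\to x_0\\x\in\Omega}}\bigl(F_\varepsilon^u(M,r,x)-f(x)\bigr)\right\},
\]
with the analogous formula (min and liminf) for $G_*$. As $h\to0$ and $y\to x_0$, every grid point $y$ is either a boundary node, in which case $G_{\varepsilon,h}^u(\varphi(y)+\xi,y)=\varphi(y)+\xi-g(y)\to\varphi(x_0)-g(x_0)=r-g(x_0)$, or an interior node, in which case it equals the PDE expression whose upper (resp.\ lower) limit along $\Omega$ is controlled exactly by the second entry of the $\max$ (resp.\ $\min$). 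In either situation the discrete value is asymptotically trapped between $G_*$ and $G^*$, which is precisely the required pair of inequalities.

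The step I expect to be the main obstacle is the boundary analysis, for two reasons. First, $f$ is only assumed to lie in $L^\infty(\Omega)\cap C(\Omega)$ and need not extend continuously to $\partial\Omega$; this is exactly why the pointwise limit fails there and the limsup/liminf envelopes are unavoidable, and one must check that the near-boundary stencil samples $f$ only at interior nodes, so that the discrete values remain governed by the one-sided limits appearing in $G^*$ and $G_*$. Second, one must ensure the centred-difference stencil stays admissible for interior nodes lying within distance $h$ of $\partial\Omega$; this is where the extended domain $\widetilde\Omega_h$ of Remark \ref{rem_extdom} is used, guaranteeing the required stencil points are defined for all small $h$. The interior case, by contrast, is a routine consequence of the second-order accuracy of the centred differences and the continuity of the data.
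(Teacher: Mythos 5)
Your proposal is correct and follows essentially the same route as the paper's proof: verify the two envelope inequalities of Definition \ref{def_consistency} directly, splitting into the interior case (handled by the second-order accuracy of the centred differences plus continuity of $F_1$, $F_2$, $h_\varepsilon$ and the data) and the boundary case (where grid points may be either interior or boundary nodes and the limsup/liminf envelopes absorb both alternatives). Your additional remarks on the discontinuity of $f$ at $\partial\Omega$ and on the stencil admissibility near the boundary make explicit points the paper leaves implicit, but they do not change the argument.
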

\begin{proof}
Let $\varphi\in C^\infty(\overline\Omega)$ and fix $x\in\overline\Omega$. We aim to prove that 
\begin{equation}\label{eq_cons1a}
	\limsup_{\substack{h\to 0\\y\to x\\\xi\to 0}}G_{\varepsilon,h}^{\varphi+\xi}(\varphi(y)+\xi,y)\leq G^*\left(D^2\varphi(x),\varphi(x),x\right).
\end{equation}
For ease of presentation, we split the proof into two cases.

\smallskip

\noindent{\bf Case 1 - }Suppose $x\in\Omega$. In that case, the points $y$ approaching $x$ are also in the interior $\Omega$. Hence,
\begin{align*}
		G_{\varepsilon,h}^{\varphi+\xi}(\varphi(y)+\xi,y)&=\varepsilon (\varphi(y)+\xi)+h_\varepsilon^{\varphi+\xi}(y)F_1(D^2_h(\varphi(y)+\xi))\\
			&\quad+(1-h_\varepsilon^{\varphi+\xi}(y))F_2(D^2_h(\varphi(y)+\xi)).
\end{align*}

We also know that 
\[
	D_h^2(\varphi(y)+\xi)= D^2\varphi(x)+O(h^2)\hspace{.2in}\mbox{and}\hspace{.2in}D_h(\varphi(y)+\xi)= D\varphi(x)+O(h).
\]
As a consequence, the continuity of $F_1$, $F_2$, and $h_\varepsilon^{\varphi+\xi}$ ensure that 
\begin{align*}
		\limsup_{\substack{h\to 0\\y\to x\\\xi\to 0}}G_{\varepsilon,h}^{\varphi+\xi}(\varphi(y)+\xi,y)&=\varepsilon\varphi(x)+h_\varepsilon^{\varphi}(x)F_1(D^2\varphi(x))\\
			&\quad+(1-h_\varepsilon^{\varphi}(x))F_2(D^2\varphi(x))\\
			&\leq G^*(D^2\varphi(x),\varphi(x),x).
\end{align*}
We proceed with the case $x\in\partial\Omega$.

\smallskip

\noindent{\bf Case 2 - }Suppose now that $x\in\partial\Omega$. In this case, we can consider that either the approaching points satisfy $y\in\Omega_h$ or $y\in\partial\Omega_h$; this is due to the limit superior operation. 

Let $y\in\partial\Omega$. Then
\[
	G_{\varepsilon,h}^{\varphi+\xi}(\varphi(y)+\xi,y)=\varphi(y)+\xi-g(y)\longrightarrow\varphi(x)-g(x)\leq G^*(D^2\varphi(x),\varphi(x),x),
\]
as $h\to0$, $y\to x$ and $\xi\to 0$. Hence, the desired inequality follows. Now, let $y\in\Omega_h$. As before, we obtain
\begin{align*}
		&\limsup_{\substack{h\to 0\\y\to x\\\xi\to 0}}G_h(\varphi(y)+\xi,y)\leq G^*\left(D^2\varphi(x),D\varphi(x),\varphi(x),x\right).
\end{align*}
In any case, \eqref{eq_cons1a} is verified, and the method is consistent with \eqref{eq_nm1/3}.
\end{proof}

In the sequel, we present the proof of Theorem \ref{teo_main}.

\begin{proof}[Proof of Theorem \ref{teo_main}]
Let $(u_{\varepsilon,h})_{0<h\ll1}$ be the family of solutions to
\[
	G_{\varepsilon,h}^{u_{\varepsilon,h}}(u_{\varepsilon,h}(x),x)=0
\]
in $\overline\Omega_h$. Combine Corollary \ref{cor_pf} and Proposition \ref{prop_consistency} with Proposition \ref{prop_bs91} to conclude that $u_{\varepsilon,h}\to u_\varepsilon$, where $u_\varepsilon$ is a viscosity solution to 
\[
	\begin{cases}
		\varepsilon u_\varepsilon+h_\varepsilon^{u_\varepsilon}F_1(D^2u_\varepsilon)+(1-h_\varepsilon^{u_\varepsilon})F_2(D^2u_\varepsilon)=f&\hspace{.1in}\mbox{in}\hspace{.1in}\Omega\\
		u_\varepsilon=g&\hspace{.1in}\mbox{on}\hspace{.1in}\partial\Omega.
	\end{cases}
\]
Arguing as in \cite[Theorem 1]{PimSwi}, one concludes that $u_\varepsilon\to u$ locally uniformly in $\Omega$, where $u$ is a viscosity solution to \eqref{eq_main1}.
\end{proof}

\section{Numerical examples}\label{sec_examples}

In this section, we present two illustrative examples demonstrating the convergence of the numerical method 
in both one-dimensional and two-dimensional settings.

The approximate solution of problem \eqref{eq_main1} is obtained through an iterative process based on problem \eqref{eq_pde1} with a small parameter $\varepsilon$.
For a fixed $\varepsilon >0$, we begin with an initial guess $v_{1}$ and proceed by solving  the $n_{max}$ numerical  problems iteratively
\begin{equation}
G_{\varepsilon,h}^{v_{n}}(u_{h}^{n},x)=0, \quad n=1,\dots, n_{max}.
\label{gknp}
\end{equation}
At each iteration step $n$, given $v_{n}$, we compute the solution $u_{h}^{n}$. We then update the iteration
by setting $v_{n+1} \gets u_{h}^{n}$ 
and the problem is solved again using the 
new  value $v_{n+1}$.
After $n_{max}$ iterations, we obtain the numerical solution, which  is an approximate solution to
the problem
$$
G_{\varepsilon,h}^{u_h}(u_h(x),x)=0.
$$
To solve each  of the numerical problems in \eqref{gknp}, we discretize de second order derivatives as described 
at the beginning of Section \ref{sec_method}. This leads  to a nonlinear system given by 
$$
G_{\varepsilon,h}^{v_{n}}(u_{h,i}^{n},x_i)=0, \quad  x_i \in \Omega_h,
$$
where $u_{h,i}^{n}$ are the solutions at the interior points to be determined.
This operator also depends on   boundary points. However,  since their values are known, they are not treated as unknowns in the system.
To obtain the approximate solution over the discrete domain, the resulting nonlinear system of equations must be solved. 
For each fixed $v_n$, we apply 
an Euler map, that is, starting from an initial guess 
 $u_{h}^{n,1}$,  a number of  iterations are performed according to
$$
u_{h,i}^{n,m+1}  =u_{h,i}^{n,m} - \rho G_{\varepsilon,h}^{v_{n}}(u_{h,i}^{n,m},x_i), \quad m=1,\dots, m_{max}.
$$
Therefore for each $v_n$ we obtain the numerical solution $u_{h}^{n,m_{max}}$ which approximates the solution $u_h^n$.
 For clarity, the algorithm is presented below. 
 
 \begin{algorithm}[h]
\caption{Numerical scheme}
\begin{algorithmic}[]
\State \textbf{Input:} Set $\Omega$, the number of points $N$  and   the uniform grid size $h$.
\State \hspace{15pt} Set source term $f(x)$ 
\State \hspace{15pt} Set stability parameter $\rho =0.05 h^2$ and $\varepsilon = 1.2  h$
\State \hspace{15pt} Set number of outer iterations $n_{\max}$ and inner steps $m_{\max}$
\State \hspace{15pt}  Define nonlinear operators $F_1,F_2$ and $h_\varepsilon(v)$ to build $F_{\varepsilon,h}^v(M,u_h)$
\State \hspace{15pt} Initialize solution $v:=v_{1}$  and set $u_{h}^{1,1}:=v_1$

\For{$n = 1$ to $n_{\max}$}
    \For{$m = 1$ to $m_{\max}$}
            \State $u_h^{n,m+1} = u_{h}^{n,m} - \rho (F_{\varepsilon,h}^{v_n}(M,u_h^{n,m})-f)$
        \State Enforce boundary conditions of $u$
    \EndFor
     \State $v_{n+1}= u_h^{n,m_{max}}$
      \State $u_h^{n+1,1}= v_{n+1}$
\EndFor
\State \textbf{Output:} Numerical solution $u_h^{n_{max},m_{max}}$
\end{algorithmic}
\end{algorithm}


\begin{example}[One-dimensional problem]\label{ex_tp1}\normalfont 
We consider a one-dimensional problem defined in $[-1,1]$,
discretized using a uniform grid with  $N=250$ points. The  source term  is defined according to the exact solution of the problem which is
  $u_{exact}(x)=-x^2/2$ for $x<0$ and $u_{exact}(x)=x^2/2$ for $x>0$, see Figure \ref{fig1d}(a). 
  This profile corresponds to different concavities on either side 
of the origin.

\begin{figure}[h]
    \centering
    \begin{subfigure}[b]{0.49\textwidth}
        \includegraphics[width=\textwidth]{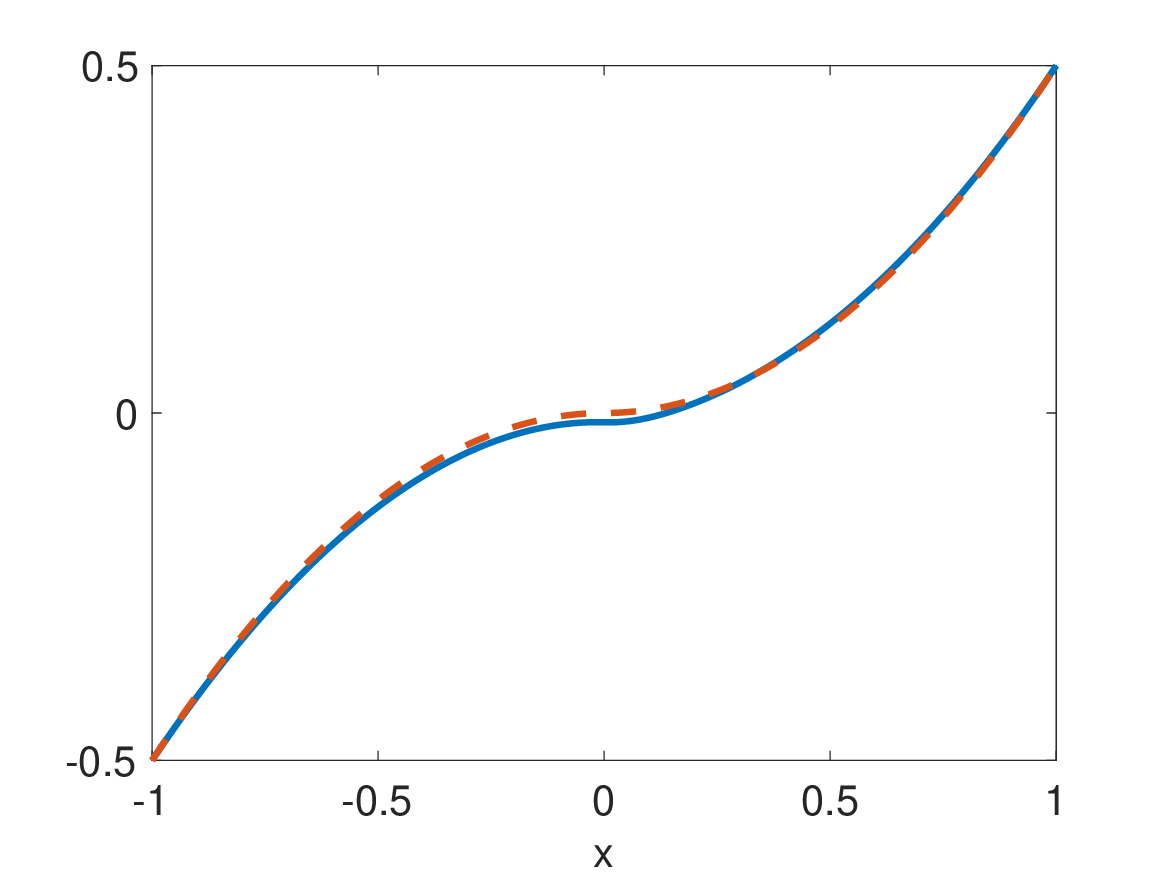}
         \caption{}
        \label{fig1da}
    \end{subfigure}
    \hfill
    \begin{subfigure}[b]{0.49\textwidth}
        \includegraphics[width=\textwidth]{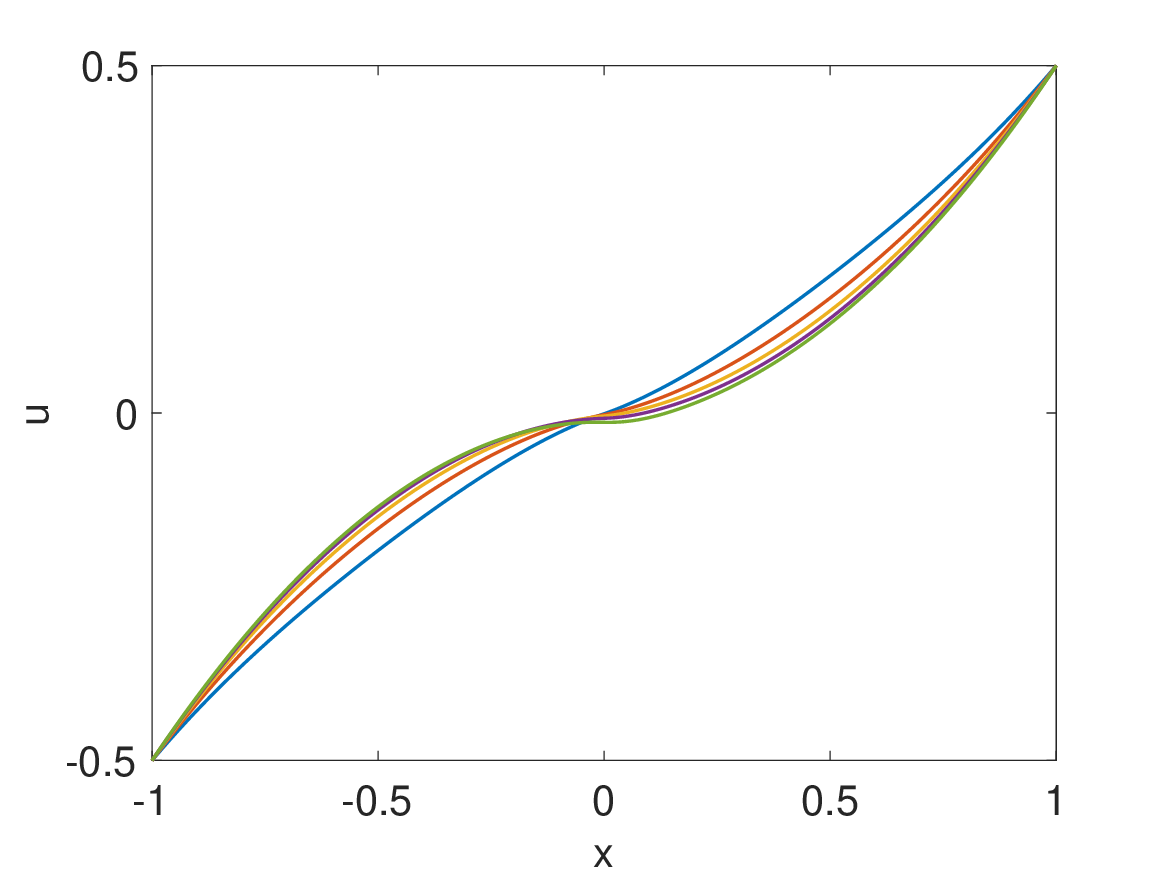}
         \caption{}
        \label{fig1db}
    \end{subfigure}
    \hfill
    \begin{subfigure}[b]{0.49\textwidth}
        \includegraphics[width=\textwidth]{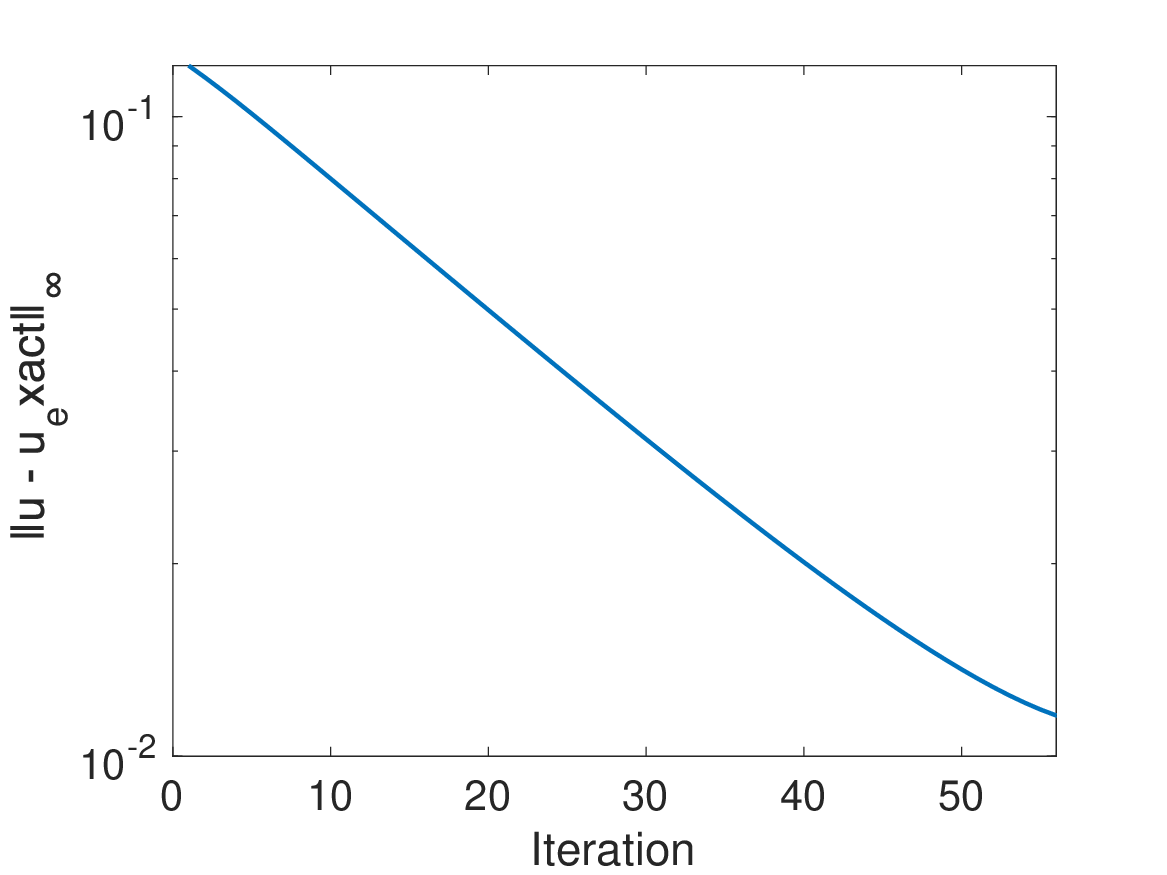}
         \caption{}
        \label{fig1dc}
    \end{subfigure}
    \caption{One-dimensional example: 
    (a) Exact solution $(- - )$ versus approximate solution $(\--)$; (b) Approximate solution $u_h^{n,m_{max}}$,  for  $n=1, \dots, 55$; 
    (c) Plot of the error $||u_h^{n,m_{max}}-u_{exact}||_\infty$ for  $n=1, \dots, n_{max}$. }
    \label{fig1d}
\end{figure}

To define the operator 
$F_{\varepsilon,h}^v(M,x_i)$ 
we define the two nonlinear operators $F_1$ and $F_2$ as
$$
F_1(M) = \max(-3M, -2 M), \qquad F_2(M) = \max(-M, -2 M).
$$

The numerical method consists in two iterative processes. An outer loop that updates the function $v$ and consequently the regularization function
$h_\varepsilon(v)$, and an inner loop that  applies explicit updates to solve the nonlinear discrete system.
This  second loop  mimics a pseudo-time-stepping procedure to reach a steady-state solution 
of the underlying nonlinear partial differential equation. See Figure \ref{fig1d}(b), to visualize a couple of iterations of the outer loop.

The value of $\varepsilon$ is set to $1.2 h$, where $h$ denotes the mesh size.
The second order derivative is approximated using the centered finite difference approximation and 
to ensure stability of the  explicit Euler method, which is used to iteratively solve the nonlinear system,
the parameter $\rho$ is choosen as
$\rho = 0.05 h^2$.  Dirichlet boundary conditions are imposed using the known values 
$u(-1)=-0.5$ and $u(1)=0.5$.

The algorithm tracks convergence by computing the maximum error with respect to the exact solution at each outer iteration
$||u_h^{n,m_{max}} -u_{exact}||_{\infty}, \ n=1,\dots, n_{max}$.
A semilogarithmic plot ($x$ is ploted in linear scale and $y$ is ploted in a logarithmic scale) illustrates the convergence behavior,
displayed in Figure \ref{fig1d}(c). 

\end{example}


\begin{figure}[ht]
    \centering
    \begin{subfigure}[b]{0.49\textwidth}
        \includegraphics[width=\textwidth]{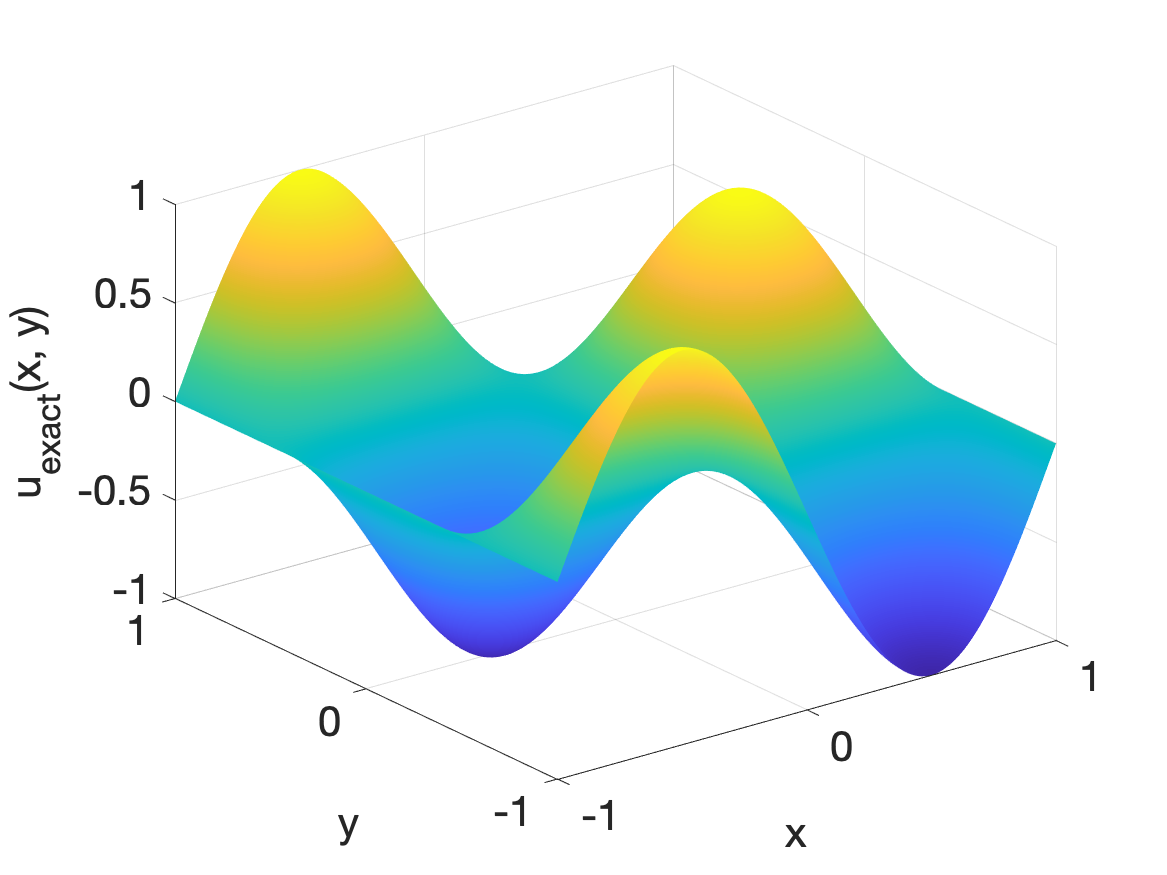}
        \caption{}
        \label{fig:exact}
    \end{subfigure}
    \hfill
    \begin{subfigure}[b]{0.49\textwidth}
        \includegraphics[width=\textwidth]{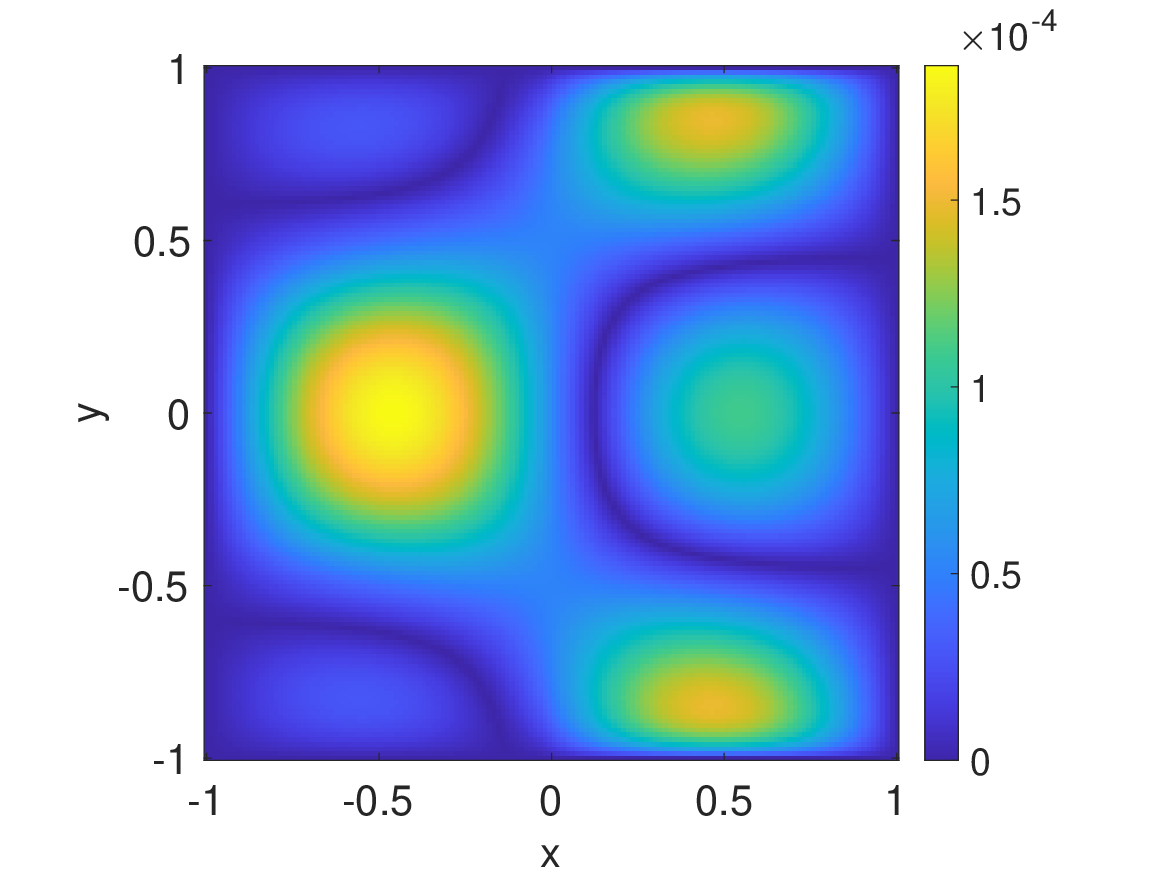}
            \caption{}
        \label{fig:heatmap}
    \end{subfigure}
    \hfill
    \begin{subfigure}[b]{0.49\textwidth}
        \includegraphics[width=\textwidth]{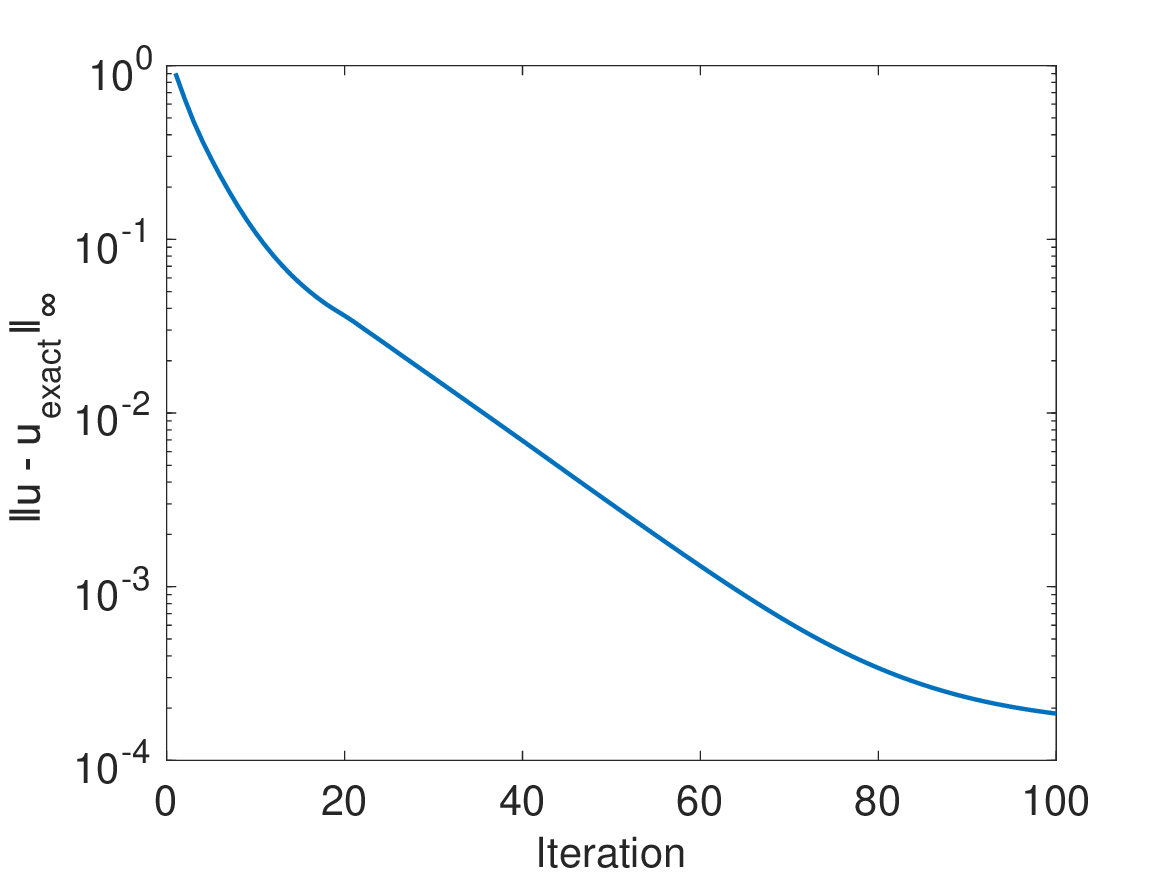}
            \caption{}
        \label{fig:convergence}
    \end{subfigure}
    \caption{Two-dimensional example:  (a) Exact solution; (b) Maximum error, that is, $||u^{n_{max},m_{max}}-u_{exact}||_\infty$, with $n_{max}=100$ ;
    (c) Error at each $v$ iteration,  that is, $||u^{n,m_{max}}-u_{exact}||_\infty$ for  $n=1, \dots, 100$.}
    \label{fig2d}
\end{figure}

\begin{example}[Two-dimensional problem]\label{ex_tp2}\normalfont 
For the two-dimensional problem the domain is $[-1,1]\times [-1,1]$ and we build a source term for the problem with the smooth exact solution
$u_{exact}(x,t) = \sin(\pi x) \cos(\pi y) $. The domain is discretized using $N=250$ points in each direction. The problem depends on $\varepsilon$ and this is chosen as 
$\varepsilon = 1.2 h$ where $h$ is  the mesh size in both directions.
The numerical method initializes with a smooth guess and iterates in two levels as described previously.
The outer iterations update the $v$ function and consequently the $h_\varepsilon(v)$.
The inner iterations apply explicit Euler updates to solve the nonlinear discrete system, for a fixed $v$.
Then the solution enters as the new $v$ and also as the new guess for the Euler iteration.

At each inner step, the boundary conditions are enforced using the  Dirichlet boundary conditions.
The second derivatives are approximated using central finite differences on the interior nodes. 
After each outer iteration, the maximum difference between the numerical and exact solution is recorded.

We show the exact solution, the convergence history, and the pointwise error at Figure \ref{fig2d}.

\end{example}

\vspace{.25in}

\noindent{\bf Acknowledgements - }The authors are supported by the Centre for Mathematics of the University of Coimbra (funded by the Portuguese Government through FCT/MCTES, DOI 10.54499/UIDB/00324/2020). This publication is based upon work supported by King Abdullah University of Science and Technology Research Funding (KRF) under Award No. ORFS-2024-CRG12-6430.3. 
\bigskip

\noindent{\bf Declarations}

\smallskip

\noindent {Data availability statement:} All data needed are contained in the manuscript.

\noindent {Funding and/or Conflicts of interests/Competing interests:} The authors declare that there are no financial, competing or conflicts of interest.

\bibliographystyle{plain}
\bibliography{biblio}

\smallskip

\bigskip

\noindent\textsc{Edgard A. Pimentel}\\
CMUC, Department of Mathematics\\ 
University of Coimbra\\
3001-143 Coimbra, Portugal\\
\noindent\texttt{edgard.pimentel@mat.uc.pt}

\bigskip

\noindent\textsc{Erc\'ilia Sousa}\\
CMUC, Department of Mathematics\\ 
University of Coimbra\\
3001-143 Coimbra, Portugal\\
\noindent\texttt{ecs@mat.uc.pt}

\end{document}